\documentclass[11pt, english]{article}

\usepackage[margin= 2 cm]{geometry}
\usepackage[T1]{fontenc}
\usepackage{amsthm}
\usepackage{amsmath}
\usepackage{amssymb}
\usepackage{setspace}
\usepackage{mathtools}
\usepackage{verbatim}
\usepackage{booktabs}
\usepackage{graphicx}
\usepackage[hidelinks]{hyperref}
\usepackage{cleveref}
\usepackage{graphicx}
\usepackage{appendix}
\usepackage[inline]{enumitem}
\usepackage{framed}
\usepackage{subcaption}
\usepackage{microtype,xurl}
\usepackage{caption}
\usepackage{listings}
\usepackage{xcolor}

\usepackage{floatrow}
\usepackage[T1]{fontenc}

\lstdefinestyle{verifier}{
  language=Python,
  basicstyle=\ttfamily\fontsize{8.1}{10.2}\selectfont,
  keywordstyle=\bfseries,
  commentstyle=\itshape,
  stringstyle=\ttfamily,
  showstringspaces=false,
  keepspaces=true,
  columns=fullflexible,
  breaklines=true,
  breakatwhitespace=false,
  numbers=left,
  numberstyle=\tiny,
  numbersep=7pt,
  xleftmargin=1.6em,
  aboveskip=0.8em,
  belowskip=0.8em,
  tabsize=4,
  upquote=true
}

\theoremstyle{plain}

\newtheorem*{thm*}{Theorem}
\newtheorem{thm}{Theorem}
\Crefname{thm}{Theorem}{Theorems}

\newtheorem*{lem*}{Lemma}
\newtheorem{lem}[thm]{Lemma}
\Crefname{lem}{Lemma}{Lemmas}

\newtheorem*{claim*}{Claim}

\Crefname{claim}{Claim}{Claims}

\newtheorem{prop}[thm]{Proposition}
\Crefname{prop}{Proposition}{Propositions}

\newtheorem{cor}[thm]{Corollary}
\Crefname{cor}{Corollary}{Corollaries}

\Crefname{conj}{Conjecture}{Conjectures}

\Crefname{qn}{Question}{Questions}

\Crefname{obs}{Observation}{Observations}

\newtheorem{ex}[thm]{Example}
\Crefname{ex}{Example}{Examples}

\theoremstyle{definition}

\Crefname{prob}{Problem}{Problems}

\Crefname{defn}{Definition}{Definitions}

\newtheorem*{defn*}{Definition}

\theoremstyle{remark}

\expandafter\def\expandafter\normalsize\expandafter{%
    \normalsize
    \setlength\abovedisplayskip{8pt}
    \setlength\belowdisplayskip{8pt}
    \setlength\abovedisplayshortskip{4pt}
    \setlength\belowdisplayshortskip{4pt}
}

\usepackage[square,sort,comma,numbers]{natbib}
 \setlist[itemize]{leftmargin=*}

\newcommand{\im}{\operatorname{im}}

 \newcommand{\R}{\mathbb R}
 \newcommand{\one}{\mathbf1}
 \newcommand{\file}[1]{\mbox{\nolinkurl{#1}}}

\newcommand{\Q}{\mathbb Q}
\newcommand{\Z}{\mathbb Z}

\newcommand{\Span}{\operatorname{span}}

\newcommand{\dist}{\operatorname{dist}}

\newcommand{\normZ}[1]{\left\|#1\right\|_{\R/\Z}}

\newcommand{\Ucal}{\mathcal U}
\newcommand{\Pcal}{\mathcal P}

\newcommand{\sgn}{\operatorname{sgn}}
\newcommand{\norm}[1]{\left\lVert #1\right\rVert_2}

\title{Hadwiger--Nelson Problem for Typical Norms}
\author{Noga Alon\thanks{Department of Mathematics, Princeton University and Schools of Mathematics, Computer Science and AI,
Tel Aviv University, Israel. Research supported in part by NSF grant DMS-2553988 and by AFOSR grant
26RT0135. Email: \href{nalon@math.princeton.edu} \textbf{nalon@math.princeton.edu}} \and Matija Buci\'c\thanks{Faculty of Mathematics, University of Vienna, Vienna, Austria. This research was funded in whole or in part by the Austrian Science Fund (FWF) 10.55776/AST2416125. Email: \href{mailto:matija.bucic@univie.ac.at} \textbf{matija.bucic@univie.ac.at}. } \and James Davies\thanks{Institute of Mathematics, Leipzig University, Augustusplatz 10, 04109 Leipzig, Germany. Email: \href{mailto:jgdavies@uwaterloo.ca} \textbf{jgdavies@uwaterloo.ca}. Supported by the Alexander von Humboldt Foundation in the framework of the Alexander von Humboldt Professorship of Daniel Král' endowed by the Federal Ministry of Education and Research.}}  
\date{}

\begin{document}

\maketitle

\begin{abstract}
The classical Hadwiger-Nelson problem asks for the chromatic number of the unit distance graph of the Euclidean plane. Over the years, this problem has been considered for a variety of other normed spaces, with higher-dimensional Euclidean space $\R^d$ being perhaps the most natural and well-studied.   
Alon, Buci\'c, and Sauermann proved that, for a typical norm on
$\R^d$, the chromatic number of the unit distance graph is at most $2^d$.
We improve this exponential bound to a linear one by showing that the chromatic number of a typical norm on $\mathbb{R}^d$ is at most $2d$ and that this is tight. This shows a stark difference in the behavior compared to the Euclidean case, where there is an exponential lower bound. 

One of the key ingredients is a certain high-dimensional, matrix generalization of the Lonely runner conjecture, which also allows us to completely settle the so-called view-obstruction conjecture of Schoenberg from 1978 and a more recent covering-radius conjecture of Henze and Malikiosis.
\end{abstract}

\section{Introduction}
The question of determining the chromatic number of the unit distance graph of the Euclidean norm in the plane is one of the most famous open problems in discrete geometry, known as the Hadwiger--Nelson problem. In other words, this question asks how many colors are needed in order to color all points in the plane such that any two points with Euclidean distance one receive different colors. This problem dates back to 1950, and it has been known for a long time that the answer is at least $4$ and at most $7$. In a recent breakthrough \cite{Exoo-Ismailescu,de-Grey}, the lower bound has been improved to $5$, sparking a collaborative Polymath project \cite{polymath_project} focusing on the problem. See also \cite{BMP-survey,Sw} for more details on the history of the Hadwiger--Nelson problem.

In general, for a norm $\|\cdot\|$ on $\R^d$, let $G_{\|\cdot\|}$ be the graph
with vertex set $\R^d$ in which $p$ and $p'$ are adjacent precisely
when $\|p-p'\|=1$. We refer to this graph as the \emph{unit-distance} graph of $\|\cdot\|$.
The Hadwiger-Nelson problem for other planar norms was first studied by Chilakamarri \cite{Chilakamarri} in 1991, who showed that the unit distance graph of every norm on $\R^2$ has chromatic number at least $4$ and at most $7$. 

In higher dimension $d$, it is known that the chromatic number of the unit distance graph of any $d$-norm is at most exponential in $d$ (see \cite{FK,Ku}). See also \cite{Sw} for a more detailed survey on what is known surrounding this problem. We also note that the results of Frankl and Wilson \cite{FW} give an exponential lower bound for the chromatic number of the unit distance graph of all $d$-norms which are invariant under coordinate permutations. So for many natural norms, the Hadwiger-Nelson problem has an answer growing exponentially with $d$. It is a very natural question to ask whether this behavior is ``typical'' and in fact whether it is even possible to have a norm with smaller chromatic number.

To a reader familiar with the interplay between extremal and
probabilistic combinatorics, a natural first step in approaching an
extremal question is to ask what happens for a typical object. There
does not appear to be a canonical probability distribution on the set
of norms on $\mathbb R^d$, so instead we use the standard
category-theoretic notion of typicality. We consider the space of norms
on $\mathbb R^d$, identified with their compact, convex,
origin-symmetric unit balls, and equip it with the Hausdorff metric.
This space is a Baire space. Recall that a set is \emph{nowhere dense}
if every nonempty open set contains a smaller nonempty open set
disjoint from it, and that a set is \emph{meagre} if it is a countable
union of nowhere dense sets. The Baire category theorem says that a
meagre set cannot contain a nonempty open set, or equivalently, that a
countable intersection of dense open sets is dense. We say that a
property holds for a \emph{typical} norm if the set of norms for which
it fails is meagre. Thus, for readers accustomed to probabilistic
language, ``typical'' here plays the role of ``almost surely'', with
meagre exceptional sets playing the role of null sets. In particular,
countably many properties that each hold for a typical norm hold
simultaneously for a typical norm; indeed, every nonempty open set of
norms contains a norm satisfying all of them. This is the
category-theoretic analog of discarding countably many probability-zero exceptional events, and it gives the same kind of existence conclusion that is familiar from the probabilistic method, without requiring a probability measure. For our upper bound, we will use only one rather simple property of typical norms, established in \cite{ABS}; we refer the interested reader to that paper for a more detailed introduction and further references.

Alon, Buci\'c, and Sauermann \cite{ABS} proved that for a typical norm the chromatic number is at most an exponential $2^d$ and observed this is tight for $d=2$. They also raise the very natural question of how close this is to being tight. We answer this question completely, and show that the answer is in fact linear in $d$.

\begin{thm}\label{thm:main}
For every positive integer $d$, the unit-distance graph of a typical norm $\|\cdot\|$ on $\R^d$ satisfies
\[
    \chi(G_{\|\cdot\|})\le 2d.
\]
\end{thm}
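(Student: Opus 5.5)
We will use the single property of typical norms established in \cite{ABS}: for a typical norm $\|\cdot\|$ on $\R^d$, the unit sphere contains no nonzero element of a certain countable family of ``rational-type'' configurations. The form we need is that there is a full-rank lattice $\Lambda\subset\R^d$, which we may take to be a linear image $A\Z^d$ with $A$ generic, such that no unit vector of $\|\cdot\|$ lies too close to a nonzero point of $\Lambda$. Equivalently, the norm can be perturbed so that the unit sphere $S$ avoids a neighbourhood of the lattice. We then colour $\R^d$ periodically with respect to a lattice. The colour of $x$ depends only on the position of $B^{-1}x$ in the torus $\R^d/\Z^d$, where $B$ is a suitable linear map. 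The torus is split into $2d$ measurable classes, and each class must contain no two points whose difference lies in $B^{-1}S \bmod \Z^d$.

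The natural partition uses coordinates: for $i=1,\dots,d$ and $\epsilon\in\{0,1\}$, let class $(i,\epsilon)$ consist of the points $y\in\R^d/\Z^d$ whose $i$-th coordinate lies in $[\epsilon/2,(\epsilon+1)/2)$, with $i$ chosen by a rule covering every point. A monochromatic pair then differs by a vector $v=B^{-1}u$, $u\in S$, with some coordinate $v_i$ satisfying $\|v_i\|_{\R/\Z}<1/2$ in the appropriate refined sense. To make this impossible, we need every $u\in S$ to have \emph{all} coordinates of $B^{-1}u$ far from $\Z$, which is too strong as stated. Instead we use a flexible rule: choose the colour of $y$ using the coordinate of $y$ that is ``most central''. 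The requirement then becomes a matrix Lonely-Runner-type statement. For every $u\in S$ there is an index $i$ with $\|(B^{-1}u)_i\|_{\R/\Z}\ge \tfrac{1}{2d}$ or similar, and the sets $\{y:\ y_i\in [k/(2d),(k+1)/(2d))\}$ are arranged to yield $2d$ colours in total.

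The key steps are these. (1) Fix a typical norm and extract from \cite{ABS} the genericity property that $S$ meets no proper rational subspace structure relative to a suitable $B$. This should be a countable intersection of open dense conditions. (2) Prove the high-dimensional matrix Lonely Runner statement. For a generic matrix $M$ (here $M=B^{-1}$ restricted to directions) and every real $t$-scaling, i.e.\ for every point $u$ on the closed curve-family $S$, some coordinate of $Mu$ is at distance at least $\frac{1}{2d}$ from $\Z$. Equivalently, the zonotope/cube neighbourhood $\{x:\|x_i\|_{\R/\Z}<\tfrac1{2d}\ \forall i\}$ avoids $M S$. This is the analogue of covering-radius statements of Henze--Malikiosis. (3) Build the explicit $2d$-colouring of the torus from this dichotomy and verify properness.

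The main obstacle is step (2). The cube of side $1/d$ around lattice points must be avoided by the star-shaped image of the sphere. Since $S$ is a closed hypersurface around the origin, we can rescale $B$ freely. I would first try choosing $B$ so that $S$ lies strictly between two scaled copies of a generic lattice's Voronoi-type region, and then apply a topological (degree or Borsuk--Ulam) argument to show that a continuous map $S\to\R^d$ cannot have all coordinates near integers simultaneously unless it passes through a lattice point. The genericity from (1) then forbids this.
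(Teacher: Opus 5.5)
\textbf{There is a genuine gap, in the core reduction of steps (2)--(3).}

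\emph{Step (2) is too weak to imply a colouring.} The dichotomy you want, that every unit vector $u$ has \emph{some} coordinate of $B^{-1}u$ at distance at least $\frac1{2d}$ from $\Z$, holds for \emph{every} norm after a linear change of variables. By John's theorem one can choose $B$ with $\frac1{2d}[-1,1]^d\subseteq B^{-1}K\subseteq\frac12[-1,1]^d$, and then the largest coordinate of $B^{-1}u$ works. So no Lonely-Runner-type or Borsuk--Ulam argument is needed for it. More seriously, step (3) as you describe it uses nothing but this dichotomy, so it would also apply to $\|\cdot\|_\infty$ with $B=2I$, where the dichotomy is trivial. Yet $\{0,1\}^d$ is a unit-distance clique there, so $\chi\ge 2^d>2d$ for $d\ge3$. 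The same applies to the Euclidean norm in large dimension, by \cite{FW}.

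Concretely, two points of your class $(i,k)$ only have difference $v$ with $\normZ{v_i}<\frac1{2d}$ for the \emph{selected} index $i$. The coordinate witnessing the dichotomy for $u$ can be a different one. A colouring by intervals of a coordinate really requires that coordinate to be far from $\Z$ on \emph{every} edge direction, which is exactly the condition you discarded as too strong. For a whole unit sphere and an $\R$-linear $B$ that condition is impossible when $d\ge2$: $S$ is connected and symmetric, so each $u\mapsto(B^{-1}u)_i$ vanishes somewhere on $S$. Separately, the ``lattice-avoidance'' property you attribute to \cite{ABS} holds for every norm (take a sparse lattice), so typicality never actually enters your argument.

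\emph{What is missing} is to abandon $\R$-linear, periodic colourings of $\R^d$ and instead exploit $\Q$-linear structure on finitely many directions. The paper's route is as follows.
\begin{enumerate}
\item Reduce to finite connected subgraphs via de Bruijn--Erd\H{o}s \cite{deBruijn_Erdos}.
\item Use the actual input from \cite{ABS} (\Cref{prop:ABS}): any $s$ pairwise non-antipodal unit vectors span over $\Q$ at most $ds$ others. By Edmonds' matroid partition theorem this splits the edge directions into $d$ $\Q$-independent families.
\item Express everything in a $\Q$-basis taken from one family and clear denominators. The vertices then have integer coordinates, and the directions are the rows of $d$ invertible integer $N\times N$ matrices $A_1,\dots,A_d$, where $N$ is the $\Q$-rank and may be far larger than $d$.
\item Prove (\Cref{lem:matrix}) that a single $\mathbf{x}\in\R^N$ puts \emph{every} coordinate of every $A_s\mathbf{x}$ at distance at least $\frac1{2d}$ from $\Z$. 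The proof uses a log-determinant choice of weights, a weighted least-squares exchange argument, and induction over tight subsets.
\item Colour a vertex by which arc $[\frac{r}{2d},\frac{r+1}{2d})$ of $\R/\Z$ contains its coordinate vector dotted with $\mathbf{x}$; this is proper.
\end{enumerate}
This is precisely the ``every direction'' condition, and it becomes attainable because only finitely many directions occur and the functional is $\Q$-linear in high dimension. The resulting colouring of $\R^d$ comes only through compactness and has no periodic or measurable structure. A periodic colouring as you propose would instead bound the measurable chromatic number, a much stronger statement that the paper neither proves nor needs.
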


We also show that this upper bound is best possible.

\begin{thm}\label{thm:lower-bnd}
For every positive integer $d$, there exists a nonempty open set of norms $\|.\|$ on $\R^d$ for which the unit-distance graph satisfies
$$
    \chi(G_{\|\cdot\|})\ge 2d.$$
\end{thm}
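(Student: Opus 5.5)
The plan is to exhibit one finite unit-distance graph with chromatic number $2d$ that survives every small perturbation of the norm. The natural candidate is $K_{2d}$, i.e.\ an equilateral set of $2d$ points. The reason is a dimension count. Place $2d$ points $p_1,\dots,p_{2d}\in\R^d$ and fix $p_1=0$ to kill translations. This leaves $(2d-1)d$ unknowns, and there are $\binom{2d}{2}=d(2d-1)$ equations $\|p_i-p_j\|=1$, so the system is square. I would therefore find one norm $\|\cdot\|_0$ and one equilateral configuration at which the distance map
\[
F_{\|\cdot\|}\colon (p_2,\dots,p_{2d})\mapsto (\|p_i-p_j\|)_{i<j}
\]
is a local homeomorphism, or at least has nonzero local degree at the value $\mathbf 1$.

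Nearby norms in the Hausdorff metric are only uniformly close, not $C^1$-close. So I would use a degree argument rather than the implicit function theorem. Choose a small ball $B$ around the configuration with $\mathbf 1\notin F_{\|\cdot\|_0}(\partial B)$ and $\deg(F_{\|\cdot\|_0},B,\mathbf 1)\neq 0$. If $\|\cdot\|$ is Hausdorff-close to $\|\cdot\|_0$, then $F_{\|\cdot\|}$ is uniformly close to $F_{\|\cdot\|_0}$ on $\overline B$, since norms are uniformly comparable on compacts. Homotopy invariance of degree then gives a solution of $F_{\|\cdot\|}=\mathbf 1$ in $B$. Points in $B$ stay distinct, so every norm in a neighbourhood of $\|\cdot\|_0$ contains $K_{2d}$ as a unit-distance subgraph, and hence $\chi\ge 2d$.

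To build $\|\cdot\|_0$ and the configuration, I would proceed in three steps.
\begin{enumerate}
\item Pick $2d$ points in general position, e.g.\ a small generic perturbation of suitable $0/1$ cube vertices. Arrange that the $d(2d-1)$ antipodal pairs $\pm(p_i-p_j)$ have distinct directions and are all extreme points of their symmetric convex hull $K$.
\item Take $\|\cdot\|_0$ to have as unit ball a smooth, strictly convex, origin-symmetric body $K'$ that has all $\pm(p_i-p_j)$ on its boundary. Near each $p_i-p_j$ it is a small smooth patch with prescribed outer normal $u_{ij}$, normalized by $\langle u_{ij},p_i-p_j\rangle=1$.
\item Observe that, since the points $p_i-p_j$ are extreme, each $u_{ij}$ can be tilted independently within a small open set while keeping convexity. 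This local-modification step needs care but is standard.
\end{enumerate}

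With this construction the Jacobian of $F_{\|\cdot\|_0}$ is the generalized rigidity matrix. Its row for edge $ij$ has $u_{ij}^{T}$ in block $i$ and $-u_{ij}^{T}$ in block $j$. For generic, independently chosen $u_{ij}$ its rank equals the rank of the union of $d$ copies of the graphic matroid of $K_{2d}$. By Nash-Williams, $K_{2d}$ decomposes into $d$ edge-disjoint spanning trees (Hamiltonian paths), so the union has full rank $d(2d-1)$. Concretely, one can choose each $u_{ij}$ close to $e_k$ when $ij$ lies in the $k$-th tree; the matrix is then close to block form with nonsingular blocks given by tree incidence matrices. Nonsingularity gives a local diffeomorphism, hence local degree $\pm1$.

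I expect the main obstacle to be the compatibility of the two requirements. The prescribed normals $u_{ij}$ should be near coordinate directions $e_k$ for the tree decomposition. At the same time, convexity forces each $u_{ij}$ to lie in the normal cone of $K$ at $p_i-p_j$, which need not contain the desired directions. My first attempt is to avoid needing exact coordinate normals: a nonzero determinant is an open and dense condition, so it suffices that the allowed normal cones are full-dimensional. By making each $p_i-p_j$ a vertex of a polytope before smoothing, the $u_{ij}$ can then range over open sets. The determinant is a nonzero polynomial in the $u_{ij}$, by the tree-decomposition specialization, and so cannot vanish identically on a product of open sets.
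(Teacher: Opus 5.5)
\textbf{The gap is the target graph.} Your analytic part is sound and is essentially the paper's Lemma~\ref{lem:stability}. The paper uses a polytope norm and Brouwer where you use a smoothed body and degree theory. The key step is the same: the generalized rigidity determinant is a nonzero polynomial in the normals, seen by specializing to a decomposition of $K_{2d}$ into $d$ spanning trees. The failure is in the choice of $K_{2d}$, and it sits in your Step~1, not in the obstacle you flagged.

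Requiring all $\pm(p_i-p_j)$ to be extreme points of their convex hull says exactly that $\{p_1,\dots,p_{2d}\}$ is \emph{strictly antipodal}: for every $i\ne j$, some linear functional has its unique maximum at $p_i$ and unique minimum at $p_j$. This requirement cannot be dropped, for two reasons:
\begin{enumerate}
\item Every nonempty open set of norms contains a strictly convex norm.
\item In a strictly convex norm, every equilateral set $S$ is strictly antipodal. A functional exposing $s-t$ on the unit ball also exposes it on $\mathrm{conv}(S)-\mathrm{conv}(S)$, which lies in the ball.
\end{enumerate}
So a unit-distance $K_{2d}$ throughout an open set of norms forces a strictly antipodal $2d$-point set in $\R^d$. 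For $d=2$ there is none. Equivalently, by Petty, only parallelogram norms have equilateral $4$-point sets, so no open set of planar norms has a unit-distance $K_4$. Strictly antipodal sets in $\R^3$ have at most five points, so $d=3$ fails as well. Your ``generic perturbation of suitable cube vertices'' is therefore exactly the unproved step, and it is false in low dimensions. For $d\ge4$ your route can be completed, e.g.\ with $2d$ points of a large acute set, since acute sets are strictly antipodal. It cannot give $d=2,3$.

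\textbf{How the paper avoids this.} It targets $K_{2d}$ minus an edge rather than $K_{2d}$. It takes $2d$ points with all pairwise distances $1$ except $\|a-b\|=\tfrac12$, and glues this with its reflection $2a-\mathcal P$, giving a generalized Moser spindle (Section~\ref{sec:spindle}). A $(2d-1)$-colouring would force $b$, $a$ and $b'=2a-b$ to share a colour, yet $\|b-b'\|=1$. In the stability lemma the special pair carries multiplier $s_e=2$. So the vertex condition is imposed on the scaled family: it is $2(q^+-q^-)$, not $q^+-q^-$, that must be a vertex, and the pair $q^\pm$ need not be strictly antipodal. This relaxed condition is satisfiable even for $d=3$, where no six points are strictly antipodal. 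Proposition~\ref{prop:exposed} verifies it explicitly for all $d\ge3$; $d\le2$ is classical.
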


Since in a Baire space every comeager set (complement of a meager set) intersects every open set, we conclude that it is not possible to improve \Cref{thm:main}, and in particular that there exists a norm on $\|.\|$ on $\R^d$ for which $\chi(G_{\|\cdot\|})=2d$. To the best of our knowledge this is the first result which determines exactly the Hadwiger Nelson number of a strictly convex norm in $\R^d$ (strict convexity is a classical property of a typical norm \cite{klee}), this was an open problem of Robertson from 1990's (see \cite[Problem 5]{Chilakamarri}) in the planar case $d=2$, up until the recent result of \cite{ABS}.

In \cite{ABS} the authors point out that their result in two dimensions, combined with the aforementioned breakthrough in the Euclidean case \cite{Exoo-Ismailescu,de-Grey}, showcases that the Euclidean planar norm is atypical since the chromatic number of the planar Euclidean unit distance graph is at least $5$ while the typical one is $4$. Our result shows a much more stark contrast, where the answer in large dimensions is known to be exponential in the Euclidean case, while being linear in the typical one.

\subsection{View obstruction and lattice zonotopes}\label{subsec:view-obstruction-intro}

A slight variant of the key ingredient in the proof of \Cref{thm:main} is a very appealing high-dimensional, matrix generalization of the infamous Lonely runner conjecture. In order to state it, it will be convenient to write $\normZ{t}=\dist(t,\Z)$ for the distance of $t$ to the closest integer.

\begin{thm}\label{thm:affine-avoidance}
Let $1\le k<n$, let 
$\mathbf{a_1},\ldots,\mathbf{a_n} \in \R^k$, be such that any $k$ of them are linearly independent. Let also $\mathbf b\in\R^n$. Then, 
\begin{equation}\label{eq:affine-avoidance-intro}
    \sup_{\mathbf x\in\R^k}\min_{i\in[n]}
    \normZ{\mathbf{a_i} \cdot \mathbf x-b_i}
    \ge \frac{k}{2n}.
\end{equation}
If all $\mathbf{a_i}$ have rational coordinates, the supremum is attained.
\end{thm}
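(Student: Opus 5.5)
The plan is to reduce to rational, generic data and then run a volume-versus-boundary averaging over the cells of the hyperplane arrangement
\[
\mathcal H=\{\,\mathbf x:\ \mathbf a_i\cdot\mathbf x-b_i\in\Z\,\},\qquad i\in[n].
\]
I have not checked that the averaging step closes; that is where the real difficulty lies.

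\emph{Reduction to the rational case.} The function $f(\mathbf x)=\min_i\normZ{\mathbf a_i\cdot\mathbf x-b_i}$ is $1$-Lipschitz in each linear form. The hypothesis that any $k$ of the $\mathbf a_i$ are independent is an open condition. So if the bound failed for some real data with gap $\eta$, then on a ball of radius $R$ a small rational perturbation $\mathbf a_i'$ would change $f$ by less than $\eta/2$. Once $R$ is large enough that this ball meets every residue class of the rational problem, we would get a rational counterexample. For rational $\mathbf a_i$, $f$ is periodic modulo a full-rank lattice $\Lambda$, so $\sup f$ is a maximum over a compact torus $\R^k/\Lambda$; this is the attainment claim. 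The remaining steps work on $T=\R^k/\Lambda$ with rational $\mathbf a_i$.

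\emph{Cells and local maxima.} $\mathcal H$ cuts $T$ into finitely many open convex cells $P$. On each cell every $\normZ{\mathbf a_i\cdot\mathbf x-b_i}$ is an affine distance, in the scaled sense, to one of the two bounding hyperplanes of family $i$. Write $r_P=\max_{\mathbf x\in P}f(\mathbf x)$; this is an LP (Chebyshev-centre type) value. It suffices to find one cell with $r_P\ge k/(2n)$. By LP duality, at an optimal point $\mathbf c_P$ the active constraints have scaled normals with $0$ in their convex hull. Hence the active facets alone bound a polytope $P'\supseteq P$ in which every active facet lies at scaled distance exactly $r_P$ from $\mathbf c_P$. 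This gives the exact identity
\[
\operatorname{vol}(P')=\frac{r_P}{k}\sum_{F\text{ facet of }P'}\frac{\operatorname{vol}_{k-1}(F)}{|\mathbf a_{i(F)}|}.
\]

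\emph{Global count.} In $T$, family $i$ consists of hyperplanes of total scaled area $|\mathbf a_i|^{-1}\cdot|\mathbf a_i|\operatorname{vol}(T)=\operatorname{vol}(T)$ per unit of $\mathbf a_i\cdot\mathbf x$. Each piece is a facet of exactly two cells. So
\[
\sum_P\sum_{F\subset\partial P}\frac{\operatorname{vol}_{k-1}(F)}{|\mathbf a_{i(F)}|}=2n\operatorname{vol}(T),
\qquad\text{while}\qquad \sum_P\operatorname{vol}(P)=\operatorname{vol}(T).
\]
If every cell satisfied $\operatorname{vol}(P)\le\frac{r_P}{k}\cdot(\text{scaled surface of }P)$, summing would give $\operatorname{vol}(T)\le\frac{\max r_P}{k}\,2n\operatorname{vol}(T)$, that is $\max_P r_P\ge k/(2n)$, exactly the claimed bound. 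A naive union bound gives only $1/(2n)$; the factor $k$ comes entirely from the cone-volume formula.

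\emph{Main obstacle.} The inequality $\operatorname{vol}(P)\le\frac{r_P}{k}S(P)$ is false for individual non-tangential cells; a long thin rectangle is a counterexample. It holds only for $P'$, and $P'$ can be much larger than $P$. So the real work is to redistribute between cells. The first thing I would try is a charging scheme: each cell $P$ is decomposed into cones from $\mathbf c_P$ over its facets, and the volume a cone loses by having height $>r_P$ is charged across that facet to the neighbouring cell. Periodicity and the genericity assumption (any $k$ normals independent, so the arrangement is simple and the Euler-characteristic count on $T$ gives $\#\text{cells}=\#\text{vertices}=\sum_{|S|=k}|\det(\mathbf a_S)|$ per fundamental domain) would be used to balance the charges.

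If that fails, the fallback is induction on $k$. Intersect with a hyperplane $\mathbf a_n\cdot\mathbf x=b_n+\tfrac12$, which stays in general position. On it one applies the $(k-1)$-dimensional statement to the remaining $n-1$ forms, giving value at least $\frac{k-1}{2(n-1)}$. One would then need to move off the hyperplane in a direction that trades this against the $n$-th coordinate, optimising the step length to reach $\frac{k}{2n}$. The base case $k=1$ is the measure bound $1/(2n)$: each bad set has density $2\delta$ on the line.
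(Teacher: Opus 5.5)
\textbf{There is a genuine gap: the step you flag yourself is the entire proof, not a technicality.}

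By your own global count $\sum_P S(P)=2n\operatorname{vol}(T)$, the inequality $\operatorname{vol}(T)\le\frac{\max_P r_P}{k}\sum_P S(P)$ that the charging scheme must deliver is literally equivalent to $\max_P r_P\ge\frac{k}{2n}$. That is just the theorem on $T$. So the cell decomposition restates the problem as an isoperimetric-type inequality for the arrangement, but does not reduce it.

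The proposed mechanism is also weaker than you suggest. On a cell $P$, $\normZ{\mathbf a_i\cdot\mathbf x-b_i}$ is the scaled distance to the two hyperplanes of family $i$ that enclose $P$, and these need not be facets of $P$. Hence the active constraints at $\mathbf c_P$ can come from hyperplanes that do not touch $P$. This already happens for $k=1$: take the points $10\Z$ with $\mathbf a_1=\frac1{10}$ and $0.01+\Z$ with $\mathbf a_2=1$. On $P=(0.01,1.01)$ the optimum is at $x=\frac{10.1}{11}$, and its active constraints come from the points $0$ and $1.01$, but $0\notin\partial P$. So facets of $P'$ need not lie in $\partial P$, and there is no natural way to charge "across facets of $P$".

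The fallback induction has the same hole. The hyperplane section only gives $\frac{k-1}{2(n-1)}<\frac{k}{2n}$. Moving off the hyperplane must then improve all the other $n-1$ forms simultaneously, which needs exactly the global information you lack, and no mechanism is offered.

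Two smaller points. First, the rational reduction as phrased is circular, because finer rational approximations have larger period lattices. It can be repaired by simultaneous Dirichlet approximation with a common denominator $Q$ and error $O(1/(QQ_0))$; then the error over the fundamental domain $[0,Q)^k$ is $O(k/Q_0)$. Second, full spark does not make the arrangement simple for special $\mathbf b$, for example $\mathbf b=\mathbf 0$.

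\textbf{The missing idea (the paper's route) avoids cells altogether.}

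First choose equal-leverage weights: positive $w_i$ with $w_i\mathbf a_i^{\mathsf T}M^{-1}\mathbf a_i=\frac kn$ for all $i$, where $M=\sum_i w_i\mathbf a_i\mathbf a_i^{\mathsf T}$. They come from minimizing $\log\det\sum_i e^{t_i}\mathbf a_i\mathbf a_i^{\mathsf T}-\frac kn\sum_i t_i$ on $\{\sum_i t_i=0\}$. By Cauchy--Binet this is coercive precisely because every $k$ of the $\mathbf a_i$ are independent.

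Next set $\Phi(\mathbf x,\mathbf z)=\sum_i w_i(\mathbf a_i\cdot\mathbf x-b_i-\frac12-z_i)^2$. Take $(\mathbf x,\mathbf z)\in\R^k\times\Z^n$ within $\eta$ of $\inf\Phi$, with $\mathbf x$ optimal for its $\mathbf z$, and write $r_i$ for the residuals. The normal equations $\sum_i w_ir_i\mathbf a_i=\mathbf 0$ show that the exchange $(\mathbf x,\mathbf z)\mapsto(\mathbf x+\sigma w_jM^{-1}\mathbf a_j,\mathbf z+\sigma\mathbf e_j)$ changes $\Phi$ by exactly $w_j(1-\frac kn-2\sigma r_j)$. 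This change must be $\ge-\eta$ for $\sigma=\pm1$. Hence $|r_j|\le\frac{n-k}{2n}+\frac{\eta}{2w_j}$, that is, $\normZ{\mathbf a_j\cdot\mathbf x-b_j}\ge\frac{k}{2n}-\frac{\eta}{2w_j}$ for all $j$ simultaneously.

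In this argument the factor $k$ enters through the trace identity $\sum_i w_i\mathbf a_i^{\mathsf T}M^{-1}\mathbf a_i=\operatorname{tr}I_k=k$, not through a cone-volume formula. The $\eta$-argument also handles irrational $\mathbf a_i$ directly, so neither the rational reduction nor any combinatorics of the arrangement is needed.
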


Geometrically speaking, if we write $A$ for the matrix with rows $\mathbf{a_i}$, \Cref{thm:affine-avoidance} says that the closure of every affine $k$-flat
\[
   \{A\mathbf x-\mathbf b+\Z^n:\mathbf x\in\R^k\}\subseteq\R^n/\Z^n
\]
which is in general position with respect to the coordinate directions, meets the central cube
\[
   \left[\frac{k}{2n},1-\frac{k}{2n}\right]^n.
\]
So in some sense, this central cube ``obstructs the view'' to infinity. This puts it in the setting of general view obstruction problems, where one attempts to grow a certain object centered at every grid cell (in say $\Z^d$) periodically translated, and understands how large these objects need to be to ensure that every line (or higher dimensional subspace) from the origin intersects them (and hence blocks their view of infinity). We point the reader to \cite{VO-BieniaEtAl1998} and its references for more details. 

This result can also be viewed as a shifted, multi-parameter relative of the Diophantine-approximation problems of Wills and Cusick and of the lonely runner problem
\cite{VO-Wills1967,VO-Wills1968I,VO-Wills1968II,VO-Cusick1973,VO-Cusick1974,VO-BieniaEtAl1998,VO-PerarnauSerra}.

Schoenberg in 1978 formulated the corresponding question in terms of periodic slab families, which he called \emph{monochromes}, and in terms of reflected higher-dimensional billiard signals in a cube
\cite{VO-KonigSzucs,VO-SchoenbergI,VO-SchoenbergII,VO-SchoenbergIII}.
A monochrome of
density $\delta$ is
\[
    M_i(\delta)=\{\mathbf x\in\R^k:\normZ{\mathbf{a_i} \cdot \mathbf x-b_i}\le\delta/2\}.
\]
In other words, this is simply a $\delta$ thickening of the parallel affine hyperplanes given by $\mathbf{a_i} \cdot \mathbf x=b_i+\Z$. 

He referred to an $n$-tuple of such monochromes as \emph{admissible} if any $k$ among the $\mathbf{a}_i$ are linearly independent (this property is sometimes referred to as ``full spark''). 
Equation~\eqref{eq:affine-avoidance-intro} immediately implies that no
admissible family of common density $\delta<k/n$ covers $\R^k$. This proves Schoenberg's view-obstruction conjecture \cite[Conjecture 1]{VO-SchoenbergIII}, arguably one of the most classical open problems in this area.   
Further work on view obstruction and its higher-dimensional variants includes
\cite{VO-Cusick1982,VO-CusickPomerance1984,VO-Chen1994,VO-ChenCusick1999,VO-Dienst1998}.

The same theorem also settles an essentially equivalent covering-radius conjecture of Henze and Malikiosis \cite{VO-HenzeMalikiosis}.
If $\mathbf z_1,\ldots,\mathbf z_m\in\Z^d$ span $\R^d$, write
\[
    Z=\sum_{i=1}^m\left[-\frac{\mathbf z_i}{2},\frac{\mathbf z_i}{2}\right]
\]
for the associated \emph{centred lattice zonotope}, and let
\[
    \mu(Z,\Z^d)=\inf\{\rho>0:\rho Z+\Z^d=\R^d\}
\]
be its \emph{covering radius}.  The generators are in \emph{linear general
position} if every $d$ of them are linearly independent. 
Henze and Malikiosis \cite{VO-HenzeMalikiosis} conjectured that the covering radius of a lattice zonotope generated by $m\ge d$ vectors in
linear general position is at most $d/m$. We confirm this conjecture.

\begin{thm}\label{thm:zonotope-covering}
Let $Z\subseteq\R^d$ be a lattice zonotope generated by $m\ge d$ vectors in
linear general position.  Then
\[
    \mu(Z,\Z^d)\le \frac{d}{m}.
\]
\end{thm}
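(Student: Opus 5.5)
We deduce \Cref{thm:zonotope-covering} from \Cref{thm:affine-avoidance} by duality. Covering statements about $\rho Z+\Z^d$ become statements about how close the point $A\mathbf x-\mathbf b$ can stay to the integer lattice.

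\textbf{Setting up the dual picture.} Let $\mathbf z_1,\ldots,\mathbf z_m\in\Z^d$ be the generators, in linear general position, and let $Z=\sum_i[-\mathbf z_i/2,\mathbf z_i/2]$. If $m=d$, then $Z$ is a parallelepiped spanned by a basis $\mathbf z_1,\ldots,\mathbf z_d$ of a sublattice $L\subseteq\Z^d$. The half-open parallelepiped $\sum_i[-\mathbf z_i/2,\mathbf z_i/2)$ is a fundamental domain of $L$, so $Z+L=\R^d$ and hence $Z+\Z^d=\R^d$. This gives $\mu(Z,\Z^d)\le1=d/m$, so from now on we assume $m>d$.

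Let $B$ be the $d\times m$ matrix with columns $\mathbf z_i$. Then
\[
\rho Z=\{B\mathbf t:\mathbf t\in[-\rho/2,\rho/2]^m\},
\]
and $\rho Z+\Z^d=\R^d$ means: for every $\mathbf y\in\R^d$ there is $\mathbf t\in\R^m$ with $\|\mathbf t\|_\infty\le\rho/2$ and $B\mathbf t\equiv\mathbf y\pmod{\Z^d}$.

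Fix $\mathbf y$ and pick any real solution $\mathbf s$ of $B\mathbf s=\mathbf y$; one exists since $B$ has rank $d$. The solutions of $B\mathbf t\in\mathbf y+\Z^d$ are
\[
\mathbf t=\mathbf s+\mathbf w+\mathbf u,\qquad \mathbf w\in\ker B,\ \mathbf u\in B^{-1}(\Z^d).
\]
Since $B$ is integral, $B^{-1}(\Z^d)\supseteq\Z^m$. It therefore suffices to find $\mathbf w\in\ker B$ with
\[
\max_i\normZ{s_i+w_i}\le\frac{\rho}{2},
\]
because we can then subtract the nearest integer vector to get the required $\mathbf t$.

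\textbf{Applying \Cref{thm:affine-avoidance}.} Put $k=m-d\ge1$, so $k<m$. Choose an $m\times k$ matrix $A$ whose columns form a basis of $\ker B$, and let $\mathbf{a_i}\in\R^k$ be its rows. Then $\ker B=\{A\mathbf x:\mathbf x\in\R^k\}$, and because $B$ is rational we may take $A$ rational. Setting $\mathbf b=-\mathbf s$, we need
\[
\min_{\mathbf x\in\R^k}\max_i\normZ{\mathbf{a_i}\cdot\mathbf x-b_i}\le\frac{\rho}{2}
\qquad\text{for }\rho=\frac dm.
\]
The problem is that this is a min-max, whereas \Cref{thm:affine-avoidance} gives a max-min: it lower-bounds $\sup_{\mathbf x}\min_i\normZ{\mathbf{a_i}\cdot\mathbf x-b_i}$.

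I would bridge this with a shift by $\tfrac12$. We have $\normZ{\tfrac12-u}=\tfrac12-\normZ{u}$, so replacing $b_i$ by $b_i'=b_i+\tfrac12$ gives
\[
\min_i\normZ{\mathbf{a_i}\cdot\mathbf x-b_i'}=\frac12-\max_i\normZ{\mathbf{a_i}\cdot\mathbf x-b_i}.
\]
Apply \Cref{thm:affine-avoidance} with $n=m$, $k=m-d$ and the shift $\mathbf b'$. The supremum is attained because $A$ is rational, so there is an $\mathbf x$ with
\[
\min_i\normZ{\mathbf{a_i}\cdot\mathbf x-b_i'}\ge\frac{m-d}{2m}.
\]
Equivalently,
\[
\max_i\normZ{\mathbf{a_i}\cdot\mathbf x-b_i}\le\frac12-\frac{m-d}{2m}=\frac{d}{2m}=\frac{\rho}{2},
\]
which is exactly what we need.

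\textbf{Main obstacle: the general-position hypothesis.} \Cref{thm:affine-avoidance} requires any $k$ of the rows $\mathbf{a_i}$ to be linearly independent. This is the standard Gale-duality fact, and it is the step I would check carefully.

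Suppose some $k$ rows, indexed by $S$ with $|S|=k$, are linearly dependent. Then there is $\mathbf x\ne0$ with $\mathbf{a_i}\cdot\mathbf x=0$ for all $i\in S$. The vector $\mathbf w=A\mathbf x$ is nonzero, since $A$ has full column rank. It lies in $\ker B$ and is supported on the complement of $S$, which has exactly $d$ indices. This gives a nontrivial linear dependence among those $d$ generators $\mathbf z_i$, contradicting linear general position. The argument reverses, so the two conditions are in fact equivalent.

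With this verified, the covering property holds for every $\mathbf y\in\R^d$, and therefore
\[
\mu(Z,\Z^d)\le\frac dm.
\]
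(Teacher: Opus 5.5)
Your proof is correct and is essentially the paper's argument. The paper uses the same parallelepiped/fundamental-domain base case for $m=d$ and the same rational kernel basis $A$ of $B$, with the Gale-duality check that any $k=m-d$ rows are independent. It then applies the attained rational case of \Cref{thm:affine-avoidance} with shift $\tfrac12\one-\mathbf c$ (your $\mathbf b'$), rounds to an integer vector, and uses $B\Z^m\subseteq\Z^d$, exactly as you do.
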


Henze and Malikiosis identified this estimate as the natural covering radius
form of view obstruction and the lonely runner problems
\cite[Conjecture~1.7]{VO-HenzeMalikiosis}.

\section{Upper bound on the chromatic number of a typical norm}

\subsection{Preliminaries}

The following structural input, see \cite[Sections~5--6]{ABS}, is the only property of a typical norm that we will use. We note that this was first shown by Matou\v{s}ek \cite{matousek} for $d=2$, but his argument does not extend to higher dimensions (which was done in \cite{ABS}).

\begin{prop}[Alon--Buci\'c--Sauermann]\label{prop:ABS}
For every $d\ge2$, let $\|.\|$ be a typical norm on $\R^d$. If $\mathbf{u_1},\dots,\mathbf{u_m}$ are unit vectors in $\|.\|$ with $\mathbf{u_i}\ne\pm \mathbf{u_j}$ for $i\ne j$, then any set of $s$ of these unit vectors spans over $\Q$ at most $ds$ other vectors in the list.
\end{prop}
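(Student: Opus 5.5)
The plan is to show that, for each fixed combinatorial pattern of rational relations, the set of norms realising that pattern is closed and nowhere dense. The exceptional set is then a countable union of nowhere dense sets, hence meagre. Suppose the conclusion fails for a norm $\|\cdot\|$. Then there are unit vectors $\mathbf{u_1},\dots,\mathbf{u_s}$ and $t=ds+1$ further unit vectors $\mathbf{v_1},\dots,\mathbf{v_t}$, all pairwise distinct up to sign, with
\[
q_i\mathbf{v_i}=\sum_{j=1}^s c_{ij}\mathbf{u_j},\qquad c_{ij},q_i\in\Z,\ q_i\neq 0 .
\]
If the $\mathbf{u_j}$ are $\Q$-dependent, we first replace them by a $\Q$-basis $\mathbf{u_{j_1}},\dots,\mathbf{u_{j_r}}$ of their $\Q$-span, with $r\le s$. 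The discarded $\mathbf{u_j}$ then join the list of spanned vectors. This only makes the relation system more overdetermined, so we may assume the $\mathbf{u_j}$ are $\Q$-independent.

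For each choice of $s$, of the integer data $(c_{ij},q_i)$, and of an integer $N\ge1$, let $F$ be the set of norms admitting such a configuration in which any two of the vectors $\pm\mathbf{u_j},\pm\mathbf{v_i}$ are at Euclidean distance at least $1/N$ from each other. There are countably many such $F$, and every counterexample norm lies in one of them.

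\textbf{Step 1: each $F$ is closed.} Let $\|\cdot\|_n\to\|\cdot\|$ in the Hausdorff metric with $\|\cdot\|_n\in F$. The unit spheres converge, so the witnessing vectors stay in a compact set, and we may pass to a convergent subsequence. The limit vectors are unit vectors for $\|\cdot\|$. The integer relations pass to the limit, and the $1/N$-separation survives. So $\|\cdot\|\in F$. Consequently it suffices to show that the complement of $F$ is dense.

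\textbf{Step 2: the complement of $F$ is dense (the main step).} Given any norm, I first approximate it by a norm whose unit sphere is piecewise algebraic, for instance a polytope, or a smoothing of one, with finitely many facets or patches. I then perturb the defining coefficients generically, say choosing them algebraically independent over $\Q$. On each choice of patches hosting the $s+t$ vectors, the configuration is described as follows.
\begin{itemize}
\item The unknowns are $\mathbf{u_1},\dots,\mathbf{u_s}\in\R^d$, which is $ds$ real parameters; the $\mathbf{v_i}$ are determined by them through the relations.
\item The constraints are the $s+t=(d+1)s+1$ equations saying that each $\mathbf{u_j}$ and each $\mathbf{v_i}$ lies on its patch.
\end{itemize}
This gives more equations than unknowns. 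For generic patch coefficients, a transversality or dimension count (an elimination argument in the polytope case) should show that the solution set is empty. In the polytope case, for example, the $t$ conditions are linear in the $ds$ coordinates with generic facet normals and offsets. The separation condition is exactly what ensures the equations are genuinely distinct and not duplicated via $\mathbf{v_i}=\pm\mathbf{u_j}$.

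\textbf{Main obstacle.} The hard part will be this genericity claim. One must handle vectors lying on lower-dimensional faces, where fewer free parameters but extra equations appear. One must also check that the relations really are independent: the integer combinations are fixed and $\Q$-independent, so no linear equation should be forced by the others. The first approach I would try is to write everything as an affine system in the coordinates of the $\mathbf{u_j}$, with coefficients built from the facet data, and to show that its augmented matrix has full rank whenever the facet data are algebraically independent.
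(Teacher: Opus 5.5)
\textbf{Step 1 is fine; Step 2 is the gap.}

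Your overall architecture is sound: countably many classes $F$, indexed by the integer relation data and a separation parameter $N$, each closed and with dense complement. Step~1 works as long as you do \emph{not} build $\Q$-independence of the $\mathbf{u_j}$ into the definition of $F$. That condition is not preserved under limits, and nothing in your count needs it.

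The genuine gap is Step~2. It is the entire content of the proposition, and you leave it at ``should show''. Worse, as formulated it is false. Genericity of facet or patch coefficients is useless when several of the vectors lie on the same facet, because they then share one normal and one offset. For example, if $\mathbf{u_1},\mathbf{u_2}$ lie on a common facet $\{\mathbf a\cdot\mathbf x=h\}$, then so does $\mathbf v=\frac12(\mathbf{u_1}+\mathbf{u_2})$. The system $\mathbf a\cdot\mathbf{u_1}=\mathbf a\cdot\mathbf{u_2}=h$, $\mathbf a\cdot(\mathbf{u_1}+\mathbf{u_2})=2h$ is consistent for every choice of $\mathbf a,h$. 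In fact every polytope norm violates the proposition outright, since the $\Q$-span of two points of a facet contains infinitely many unit vectors. Your separation condition only rules out $\mathbf{v_i}=\pm\mathbf{u_j}$; it does not stop far-apart vectors from sharing a large facet. The lower-dimensional faces you worry about are harmless: a point on several facets satisfies each of their equations, so you can just pick one.

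\textbf{How to close it.} The missing idea is to couple the approximation to $N$.
\begin{enumerate}
\item Make the unit ball strictly convex, e.g.\ replace $\|\mathbf x\|$ by $\sqrt{\|\mathbf x\|^2+\eta\|\mathbf x\|_2^2}$.
\item Approximate it by a symmetric polytope all of whose facets have Euclidean diameter $<1/N$. The convex hull of a sufficiently fine symmetric net of the boundary works, by strict convexity.
\item The $1/N$-separation of $\pm\mathbf{u_j},\pm\mathbf{v_i}$ now forces the $s+t$ vectors onto pairwise distinct facet pairs. This gives equations $\sigma_e\mathbf a_{f(e)}\cdot\mathbf w_e=\beta_e h_{f(e)}$, where $f$ is injective into facet pairs, $\beta_e\in\{1,q_i\}$ is nonzero, and $\mathbf w_e$ is the fixed integer combination of the $\mathbf{u_j}$.
\item Keep the normals fixed and perturb only the offsets. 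For each of the finitely many assignments $(f,\sigma)$, the left-hand side is a linear map $\R^{ds}\to\R^{s+t}$ of rank at most $ds<s+t$. Meanwhile $h\mapsto(\beta_e h_{f(e)})_e$ is surjective. So the offsets for which the system is consistent form a proper linear subspace.
\item Choose a small offset perturbation avoiding these finitely many subspaces. Facets stay small, by upper semicontinuity of faces under perturbation.
\end{enumerate}
This yields a nearby norm outside $F$, with no algebraic independence or transversality needed.

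\textbf{A remark on the count.} Your count has slack. With $t=(d-1)s+1$ one still has $s+t=ds+1>ds$, so the same argument gives the sharper bound of $(d-1)s$ other vectors. That sharper form is what the Edmonds partition into $d$ classes, used later in the paper, actually requires.
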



We will use this as the input for the following standard form of Edmonds' matroid partition
theorem \cite{Edmonds}, specialized here to the vector space case.

\begin{prop}
\label{prop:partition}
Let $E$ be a finite index set, let $(\mathbf{v_e})_{e\in E}$ be a family of
vectors over a field $K$, and let $q\ge1$ be an integer. There is a
partition
\[
    E=I_1\sqcup\cdots\sqcup I_q
\]
such that $(\mathbf{v_e})_{e\in I_s}$ is linearly independent for every $s$
if and only if for every $S\subseteq E$
\begin{equation}\label{eq:partition}
    |S|\le q\,\dim_K\Span_K\{v_e:e\in S\}.
\end{equation}
\end{prop}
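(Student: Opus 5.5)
The statement to be proved is Proposition~\ref{prop:partition}, Edmonds' matroid partition theorem specialised to linear matroids.

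\emph{Necessity.} Suppose $E=I_1\sqcup\cdots\sqcup I_q$ with each $(\mathbf{v_e})_{e\in I_s}$ linearly independent, and let $S\subseteq E$. Then $S\cap I_s$ is an independent subfamily of $\{\mathbf{v_e}:e\in S\}$, so $|S\cap I_s|\le \dim_K\Span_K\{\mathbf{v_e}:e\in S\}$. Summing over $s$ gives \eqref{eq:partition}.

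\emph{Sufficiency.} I will use the augmenting-path argument for matroid union. Take a family of pairwise disjoint sets $I_1,\dots,I_q$, each indexing an independent subfamily, with $|I_1|+\cdots+|I_q|$ maximal. Suppose some $e_0\in E$ is uncovered; I aim for a contradiction. Build a directed exchange graph on $E$. For $f\notin I_s$, declare $f$ ``insertable'' into $I_s$ if $I_s+f$ is independent. Otherwise $I_s+f$ contains a unique circuit, and I put an arc $f\to g$ for every $g\in I_s$ lying in that circuit, so that $I_s+f-g$ is independent.

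Let $R$ be the set of elements reachable from $e_0$. If some $f\in R$ is insertable into a block not containing it, take a shortest such path. Performing the exchanges along it enlarges the total size by one, contradicting maximality.

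The main obstacle is justifying that these simultaneous exchanges keep every block independent. Several exchanges may hit the same $I_s$, and independence after multiple swaps is not automatic. The standard fix is the shortest-path lemma: if the pairs $(f_j,g_j)$ in one block $I_s$ come from a shortest path, then the bipartite exchange graph restricted to them has a unique perfect matching. Therefore $I_s+\{f_j\}-\{g_j\}$ is independent. Here I would invoke the usual unique-matching lemma for matroids, or prove it directly for vectors: the coefficient matrix expressing each $\mathbf{v}_{f_j}$ in the basis $I_s$ becomes triangular with nonzero diagonal after ordering by path position, hence is invertible.

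Otherwise no element of $R$ is insertable anywhere. I claim that then every block satisfies $\Span(I_s\cap R)\supseteq\{\mathbf{v}_f:f\in R\}$. Indeed, for $f\in R\setminus I_s$ the circuit of $f$ in $I_s$ lies in $R$ by the definition of the arcs. For $f\in R\cap I_s$ the inclusion is trivial.

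Put $S=R$ and $r=\dim\Span\{\mathbf{v_e}:e\in R\}$. Since $I_s\cap R$ is independent and spans all of $R$, it is a basis of this span, so $|I_s\cap R|=r$ for every $s$. Now $R$ contains all $\sum_s|I_s\cap R|=qr$ covered elements of $R$, plus the uncovered element $e_0$. Hence $|R|\ge qr+1>q\dim\Span\{\mathbf{v_e}:e\in R\}$, violating \eqref{eq:partition}.

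Therefore all of $E$ is covered, and the $I_s$ give the required partition.
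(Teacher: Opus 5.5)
Your proof is correct. Note that the paper gives no proof of this proposition: it quotes it as the linear-matroid case of Edmonds' matroid partition theorem \cite{Edmonds} and uses only the sufficiency direction.

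You instead supply the classical augmenting-path proof (the argument behind Edmonds' and Knuth's matroid-partitioning algorithms). The key step holds in the linear setting. Write the path as $e_0=f_0\to f_1\to\cdots\to f_k$. For a block $I_s$ hit by the arcs $f_{j_1}\to f_{j_1+1},\dots,f_{j_t}\to f_{j_t+1}$ with $j_1<\cdots<j_t$, an arc $f_{j_a}\to f_{j_b+1}$ with $a<b$ would be a shortcut. So the $t\times t$ matrix whose $(a,b)$ entry is the coefficient of $\mathbf{v}_{f_{j_b+1}}$ in the expansion of $\mathbf{v}_{f_{j_a}}$ over $I_s$ is triangular with nonzero diagonal. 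This gives independence of the exchanged block. In the stuck case, the closure of $R$ under arcs makes each $I_s\cap R$ a basis of $\Span\{\mathbf{v}_e:e\in R\}$, so $S=R$ violates \eqref{eq:partition}.

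One detail you should state explicitly. The block $I_{s^*}$ that receives the final insertable element $f_k$ may also be hit by exchanges along the path, and your ``unique matching'' sentence covers only the exchanges. This is harmless:
\begin{itemize}
\item The exchanged block $I'_{s^*}$ is independent.
\item It has the same size as $I_{s^*}$, since each exchange adds one element and removes one.
\item It lies in $\Span\{\mathbf{v}_e:e\in I_{s^*}\}$, because every element moved into $I_{s^*}$ is non-insertable there.
\end{itemize}
Hence $I'_{s^*}$ has the same span as $I_{s^*}$. Since $f_k$ is insertable into $I_{s^*}$, $\mathbf{v}_{f_k}$ is not in that span, so adding $f_k$ keeps the block independent. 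Also $f_k\notin I_{s^*}$, so $f_k$ is not among the removed elements.

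In short, the paper's citation buys brevity. Your route makes the note self-contained, and it replaces the general matroid unique-matching lemma by an elementary triangular-matrix argument.
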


Applying this criterion over $\Q$ with $q=d$, \Cref{prop:ABS} implies that every finite family of pairwise
non-antipodal unit directions can be partitioned into $d$
rationally independent families. 


\subsection{Coloring the unit-distance graph}
In this subsection, we will prove \Cref{thm:main}, modulo an auxiliary matrix lemma which we will prove in the subsequent subsection.

\begin{proof}[Proof of \Cref{thm:main}]
Fix a norm $\|.\|$ satisfying Proposition~\ref{prop:ABS}. By the graph-coloring compactness theorem of de Bruijn and Erd\H{o}s \cite{deBruijn_Erdos}, it suffices to $2d$-color every finite subgraph $G$ of its unit-distance graph. If $G$ has no edges, there is nothing to prove. If $G$ is disconnected we can repeat the below proof for each connected component so let us also assume that $G$ is connected.

Let $U=\{\mathbf{u_1},\dots,\mathbf{u_m}\}$ be the family of unit vectors giving rise to the edges of $G$, where for edges arising from parallel vectors we only take one vector. 
Propositions~\ref{prop:ABS} and~\ref{prop:partition}, applied over
$\Q$, give a partition
\[
    U=U_1\sqcup\cdots\sqcup U_d
\]
with every $U_s$ rationally independent. Assume also that $U_1$ was chosen to be maximal subject to this condition (simply move elements to it from the other sets so long as they preserve this condition), so that each vector in $U_2\sqcup\cdots\sqcup U_d$ is a rational linear combination of those in $U_1$. Let $W:=\Span_{\Q}U$ and $N:=\dim_{\Q}W.$ Let us also extend each $U_i$ for $i \ge 2$ into a basis for $W$, by duplicating and adding some vectors from $U_1$ (using the matroid exchange axiom) in order to make each $U_i$ a basis for $W$.


We will change basis to $U_1$, with the above maximality assumption allowing us to express every vector in $U_2\sqcup\cdots\sqcup U_d$ as a linear combination with rational coefficients of the basis vectors in $U_1$. In other words, each of our unit vectors in $U$ has all coordinates rational in the basis $U_1$. If we translate $G$ so that one of its vertices falls to the origin, we know by the connectivity assumption that every other vertex also has rational coordinates in our new basis. Indeed, there is a path in $G$ joining this vertex to the one at the origin showing its coordinate vector is an integral linear combination of vectors in $U$, and hence also has rational coordinates in the basis $U_1$. We may rescale the basis so that all of these coordinates become integral. 

Our plan now is to very carefully pick a (real) vector $\mathbf{x}$ with the property that its dot product with each of our unit vectors in this final coordinate system falls far from an integer (this will be achieved by Lemma \ref{lem:matrix}), namely, far here will mean at least $\frac1{2d}$. Finally, we color each of the vertices of $G$ by computing the dot product of its coordinate vector in this final basis with $\mathbf{x}$ and give it color $r$ if this dot product is in 
$$\left[\frac{r}{2d},\frac{r+1}{2d}\right) \bmod \mathbb{Z}.$$
We note that this gives a proper coloring since two adjacent vertices in $G$ mapping into the same interval would imply that the unit vector they define has a dot product with $\mathbf{x}$ which is of distance strictly smaller than $\frac1{2d}$ from an integer. 

It remains to show we can pick an $\mathbf{x}$ as above. Let us write $A_1,\ldots,A_d$ for the matrices with rows of $A_i$ being the coordinate vectors of vectors in $U_i$ in our final basis above. So in particular, each $A_i$ is integral and invertible (given each $U_i$ is a basis for $W$). 
\Cref{lem:matrix}, which we will prove in the subsequent subsection, shows that these conditions are precisely sufficient to guarantee the desired separation.




This lemma, applied with our matrices $A_s$, $q=d$, and $\mathbf{b_1}=\ldots=\mathbf{b_q}=\mathbf{0}$ (so zero shifts\footnote{We need the shifts in the statement to help with the induction proof of the lemma.}), gives $\mathbf{x}\in\R^N$ such that for each $s$
\[
    A_s \mathbf{x} \in \left[\frac{1}{2d},1-\frac{1}{2d}\right]^N+ \Z^N.
\]
In particular, each coordinate of $A_s \mathbf{x}$ is at least $\frac1{2d}$ away from any integer, as required by the above argument. This completes the proof of \Cref{thm:main} (modulo the proof of Lemma~\ref{lem:matrix}).
\end{proof}

\subsection{The auxiliary matrix integer avoidance lemma}




In this subsection, we prove the final missing ingredient in our proof of \Cref{thm:main}. We note that this is a variant of \Cref{thm:affine-avoidance}, where we do not get to assume that every set of $k$ vectors is linearly independent, but only that their set partitions into such, on the other hand, we are working over the integers, which allows us to avoid the supremum. 

\begin{lem}\label{lem:matrix}
Let $q,N$ be positive integers. Let $A_1,\dots,A_q$ be integer $N \times N$ invertible matrices.
For arbitrary $\mathbf{b_1},\dots,\mathbf{b_q}\in\R^N$, there exists $\mathbf{x}\in\R^N$ such that for every $1\le s\le q$
    \begin{equation}\label{eq:target}
        A_s\mathbf{x}-\mathbf{b_s} \in \left[\frac{1}{2q},1-\frac{1}{2q}\right]^N+ \Z^N.
    \end{equation}
\end{lem}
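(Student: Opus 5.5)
The plan is to induct on $N$, keeping the shifts $\mathbf{b_s}$ arbitrary so that the induction hypothesis can absorb translations. Throughout I use that an integer matrix $A_s$ induces a surjective, Haar-measure-preserving endomorphism $\mathbf{x}\mapsto A_s\mathbf{x}$ of the torus $\R^N/\Z^N$. Write $C=\left[\frac1{2q},1-\frac1{2q}\right]$, so that $\R/\Z\setminus(C+\Z)$ is an open arc of length $1/q$.

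\textbf{Base case $N=1$.} Here each $A_s$ is a nonzero integer $a_s$. The bad set
\[
B_s=\{x\in\R/\Z:\normZ{a_sx-b_s}<\tfrac1{2q}\}
\]
is a union of $|a_s|$ open arcs of total measure exactly $1/q$. The $q$ open sets $B_1,\dots,B_q$ then have total measure at most $1$. If they covered the compact circle, finitely many of their arcs would cover it, and covering a circle by finitely many open arcs forces two of them to overlap on a set of positive measure. The union would then have measure $<1$, a contradiction. So some $x$ lies in no $B_s$. The same measure-plus-openness argument is the template I want to lift to general $N$.

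\textbf{Inductive step.} Pick a primitive integer vector $\mathbf c$ in general position, so that for each $s$ the rows of $A_s$ restricted to the rational hyperplane $V=\mathbf c^\perp$ still span $V^*$. Then for each $s$ some row index $j_s$ can be dropped, leaving $N-1$ rows that are independent on $V$. Choose an integral basis of the lattice $V\cap\Z^N$ and a vector $\mathbf e\in\Z^N$ with $\mathbf c\cdot\mathbf e=1$, and write $\mathbf{x}=\mathbf y+t\mathbf e$ with $\mathbf y\in V$. In these coordinates the retained rows of each $A_s$ become an invertible integer $(N-1)\times(N-1)$ matrix $A_s'$ acting on $\mathbf y$, with shift $\mathbf{b_s}'(t)=\mathbf{b_s}'-tA_s\mathbf e$ depending affinely on $t$. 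By the induction hypothesis, for every $t$ the set $G_t\subseteq V/(V\cap\Z^N)$ of $\mathbf y$ satisfying \eqref{eq:target} on all retained rows is nonempty. Note that \eqref{eq:target} is not phrased to allow an integer multiple of the dropped row in place of it, so the reduction keeps the dropped rows separate and handles them at the end.

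\textbf{Main obstacle.} The remaining task is to choose $t$, together with $\mathbf y\in G_t$, so that the $q$ dropped rows $\mathbf{a}_{s,j_s}\cdot(\mathbf y+t\mathbf e)-b_{s,j_s}$ also avoid the bad arcs. This is a one-dimensional problem with $q$ constraints, exactly as in the base case. The difficulty is that $\mathbf y$ moves with $t$, so I need a coupling rather than an arbitrary selection $\mathbf y(t)$.

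My first attempt is measure-theoretic. Work on the full torus $\R^N/\Z^N$ and show that the set of $\mathbf{x}$ good for all retained rows has positive Haar measure. Then disintegrate along the projection $\mathbf{x}\mapsto\mathbf a_{s,j_s}\cdot\mathbf{x}$. This projection is a measure-preserving surjection onto $\R/\Z$, because the dropped row is independent of the retained ones. The aim is to prove that the pushforward of the good set has full support, or at least is not contained in the bad arc. A union-bound version of the base-case argument over the $q$ dropped rows would then finish.

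If that fails, I would strengthen the induction hypothesis to produce a \emph{continuous} (or at least measurable and open-set-valued) family of good $\mathbf y(t)$. I would then run the base-case covering argument on the circle of $t$-values, using that the bad set for each dropped row is a union of open arcs of total length $1/q$. I expect this coupling step to be the crux of the whole proof.
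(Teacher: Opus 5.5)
Your base case is fine, but the proposal has a genuine gap: the coupling step you call the main obstacle is essentially the whole lemma, and neither suggested route can close it. Measure counting cannot work on its own. Each of the $qN$ rows has a bad set of Haar measure exactly $1/q$ in $\R^N/\Z^N$, so the bad sets have total measure $N$, and the base-case trick (total measure at most $1$ plus openness) has no analogue. Your claim that the set of $\mathbf x$ good for all retained rows has positive measure is also false in general. Take $q=2$, $N=3$, $A_1=A_2=I_3$, $\mathbf b_1=\mathbf 0$, $\mathbf b_2=\frac12\mathbf 1$. Whichever row you drop from each matrix, some coordinate $x_i$ is retained in both, which forces $x_i\in\{\frac14,\frac34\}+\Z$, so the good set is null. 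Even when that set has positive measure, the pushforward of Haar measure restricted to it onto a dropped-row coordinate need not be uniform. So nothing bounds the relative measure of each dropped row's bad arc by $1/q$, which is what your union bound needs.

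The continuous-selection route fails for a similar reason. A continuous $\mathbf y(t)\in G_t$ need not exist: already for $N=q=2$, $G_t$ can shrink to a point and reappear on the opposite side of the circle. Even if it existed, $t\mapsto\mathbf a_{s,j_s}\cdot(\mathbf y(t)+t\mathbf e)-b_{s,j_s}$ is no longer affine with nonzero integer slope. Its bad set in $t$ therefore need not consist of arcs of total length $1/q$.

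Your reduction decouples only when the $q$ dropped rows vanish on $V$, i.e.\ are integer multiples of $\mathbf c$. Then they depend on $t$ alone: you fix $t$ by the base case and afterwards pick $\mathbf y\in G_t$ by induction. Such a family is $q$ rows of rank $1$, which is the simplest instance of the paper's Case 1. There, whenever some proper set of $rq$ rows has rank only $r$ (a tight set), a change of basis makes every $A_s$ block lower triangular with invertible integer diagonal blocks. Induction then applies to both blocks, the second with a shift that absorbs the already chosen first block. A generic $\mathbf c$ destroys exactly this structure, and when no proper tight set exists no such splitting is available.

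In that case the paper argues globally, in three steps:
\begin{enumerate}
\item It minimizes $\log\det\sum_i e^{t_i}\mathbf a_i\mathbf a_i^{\mathsf T}-\frac1q\sum_i t_i$. This is coercive precisely because there is no proper tight set and nonzero integer minors satisfy $(\det A_B)^2\ge1$. The minimizer gives weights with equal leverages $w_i\mathbf a_i^{\mathsf T}M^{-1}\mathbf a_i=\frac1q$.
\item It takes a global minimizer $(\mathbf x,\mathbf k)$ of $\sum_i w_i(\mathbf a_i^{\mathsf T}\mathbf x-\beta_i-\frac12-k_i)^2$.
\item It compares this minimizer with $(\mathbf x\pm w_jM^{-1}\mathbf a_j,\mathbf k\pm\mathbf e_j)$. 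This changes the objective by $w_j(1-\frac1q\mp2r_j)\ge0$, giving $|r_j|\le\frac{q-1}{2q}$.
\end{enumerate}
This equal-leverage exchange argument, or some substitute for it, is the missing idea.
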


\begin{proof}
For $q=1$, using that $A_1$ is invertible we may simply take
$\mathbf{x}=A_1^{-1}(\mathbf{b_1}+\tfrac12\mathbf1)$. Henceforth assume $q\ge2$.

We induct on $N$. For the base case, when $N=1$, since $A_s \in \mathbb{Z}$ for all $s$, we know that changing $x$ by an integer does not change whether it satisfies \eqref{eq:target} for any given $s$. So the subset of $[0,1]$ consisting of $x$ which fail \eqref{eq:target} for any given $s$ is a finite union of open intervals of total length $1/q$. Given we are taking $q$ such intervals, there exists a choice of $x \in [0,1]$, for which \eqref{eq:target} holds for all $s$ simultaneously, as desired.\footnote{We note that the case $N=1$ is also going to be covered by the second case we will consider below, which does not invoke the induction hypothesis, but this short argument is quite illustrative of a number of ideas we will use below.}  

Let $m=qN$, and let $\mathbf{a_1^{\mathsf T}},\ldots, \mathbf{a_m^{\mathsf T}}$ denote the row vectors of our matrices and $\beta_1,\ldots, \beta_m \in \R$ the corresponding coordinates of the shift vectors. By the invertibility of our matrices these row vectors are partitioned into $q$ bases of $\R^N$. 
Therefore, any subset of $s$ of our vectors has rank at least $s/q$. We call such a subset \emph{tight} if it has rank exactly $s/q$.



\medskip
\noindent\textbf{Case 1:} a proper tight subset exists.

Suppose that $U$ is a proper tight subset of our row vectors with rank $r$. Since $U$ is nonempty we have $r>0$ and since tightness implies $|U|=rq$ we must also have $r<N$, as otherwise $|U|=qN$ would contain all our row vectors. Since $U$ has rank $r$ each original basis has at most $r$ vectors in $U$, so $|U|=rq$ forces each basis to have exactly $r$ vectors in $U$.

In particular, $\langle U \rangle$ has a basis consisting of $r$ integer vectors. We next choose a change of basis matrix $P$ whose first $r$ columns are these $r$ vectors and the last $N-r$ columns are an integer basis for $\langle U\rangle^\perp.$ Such a matrix exists since $\langle U \rangle$ has an integer basis so running the standard Gaussian elimination algorithm produces a rational basis for $U^\perp$, which we can scale to make it integral. 
Note that $P$ is guaranteed to be nonsingular.

After permuting the rows of each $A_s$ and the corresponding coordinates of the shift vectors $b_s$ to place the $r$ row vectors of $A_s$ making a basis for $\langle U \rangle$ at the start, we can write
\[
    A_sP=
    \begin{pmatrix}
        C_s&0\\
        D_s&F_s
    \end{pmatrix},
    \qquad
    \mathbf{b_s}=\begin{pmatrix}
        \mathbf{b_s'}\\
        \mathbf{b_s''}
    \end{pmatrix}
\]
where $C_s$ is an $r \times r$ nonsingular integer matrix and $F_s$ is an $N-r \times N-r$ nonsingular integer matrix, $\mathbf{b_s'}\in \R^r$ and $\mathbf{b_s''}\in \R^{N-r}$. By induction we can choose $\mathbf{y}\in \R^r$ such that every coordinate
of $C_s\mathbf{y}-\mathbf{b_s'}$ has the required separation.
With this $\mathbf{y}$ fixed, we want to choose $\mathbf{z}\in \R^{N-r}$ such that 
$F_s\mathbf{z}+D_s\mathbf{y}-\mathbf{b_s''}$ has the required separation. This can once again be ensured by invoking induction, on the invertible integral matrix $F_s$, but with modified shift vector $\mathbf{b_s''}-D_s\mathbf{y}$, which gives us $\mathbf{z} \in \R^{N-r}$ for which indeed $F_s\mathbf{z}-(\mathbf{b_s''}-D_s\mathbf{y})$ has the required separation. Finally, taking $\mathbf{x}=P\begin{pmatrix}
        \mathbf{y}\\
        \mathbf{z}
    \end{pmatrix},$ gives the desired $\mathbf{x}$ in this case.

\textbf{Case 2.} there is no proper nonzero tight subset.

The case assumption guarantees that any proper subset $U$ of our row vectors with rank $r$ has size at most $qr-1$. We will exploit
this one unit of slack to construct positive weights that balance
the influence of all the row vectors equally in a certain global optimization problem attempting to pick a vector $\mathbf{x}$ which is as close to a half-integer as possible.

Our plan now is to consider a weighted sum of squares of distances to half integers function to help us choose our $\mathbf{x}$. The idea is that we wish to pick an $\mathbf{x}$ for which $A_s \mathbf{x}-\mathbf{b_s}$ is as close to a point in the lattice $\tfrac12\mathbf1 + \Z^N$ as possible, \emph{simultaneously} for all $s$. 
To do so we consider the function
\[
    \Phi(\mathbf{x},\mathbf{k})=\sum_{i\in [m]}w_i
        \left(\mathbf{a_i^{\mathsf T}}\mathbf{x}-\beta_i-\frac12-k_i\right)^2,
\]
where $,\mathbf{w} \in \R^m, \mathbf{x}\in\R^N, \mathbf{k}\in\Z^m$. Here, one should think of the ``half-integer'' vector $\tfrac12\mathbf1+\mathbf{k}$ as the target vector and $\mathbf{w}$ as a vector of weights, which we need to pick carefully in order to ``normalize'' between different row vectors (keep in mind that our row vector might have wildly differing norms, and hence some might have a disproportionate influence on the function).

Before proceeding to show how we pick our weights function let us note that once we fix the weigths, we will pick $\mathbf{x}$ and $\mathbf{k}$ to minimize $\Phi$ across all $\mathbf{x}\in\R^N, \mathbf{k}\in\Z^m$. That such a minimum exists follows since if we fix $\mathbf{x}$ each coordinate of $\mathbf{k}$ only impacts its own term and minimizes it if $k_i$ is the closest integer to $\mathbf{a_i^{\mathsf T}}\mathbf{x}-\beta_i-\frac12$. So, 
$$\min_{\mathbf{k}\in\Z^m}\Phi(\mathbf{x},\mathbf{k})=\sum_{i\in [m]}w_i
        \dist \left(\mathbf{a_i^{\mathsf T}}\mathbf{x}-\beta_i-\frac12, \Z\right)^2.$$
Now viewing this as a function in $\mathbf{x}$, it is clearly continuous and $\Z^N$-periodic function of $\mathbf{x}$ (since each $\mathbf{a_i}$ is integral), hence it attains its (global) minimum on the compact set $[0,1]^N$.
Armed with this minimizer $(\mathbf{x}, \mathbf{k})$ we are going to show that if \eqref{eq:target} fails for this $\mathbf{x}$ then we can make a local modification and decrease the function giving us a contradiction.   

The first step is to choose the weights. We will pick them carefully as a solution to a certain optimization problem (chosen with hindsight given the above plan, and can also alternatively be obtained by an application of the much more general result from \cite{VO-Barthe}). We will denote the variable for the optimization problem by $\mathbf{t}=(t_i)_{i\in [m]}\in\R^m$, and the optimization problem we will consider is to minimize the function $F$ defined by
\[
    M(\mathbf{t})=\sum_{i=1}^m e^{t_i}\mathbf{a_i}\mathbf{a_i^{\mathsf T}},
    \qquad
    F(\mathbf{t})=\log\det M(\mathbf{t})-\frac1q\sum_{i=1}^mt_i.
\]
It will come in useful that the matrix $M(\mathbf{t})$ is positive definite (in order to ensure $F$ is continuous). The correction/normalization term in the definition of $F$ was chosen so that
\begin{equation}\label{eq:invariance}
    F(t+c\mathbf1)=F(t)\qquad(c\in\R).
\end{equation}
This follows by recalling that $m=qN$ and by simply plugging the RHS into the above definition. This equality allows us to restrict our minimization of $F$ to only minimize over the hyperplane $H=\{\mathbf{t}:\sum_{i=1}^m t_i=0\}$.

Our next goal is to show that $F(\mathbf{t})\to \infty$ as
$\|\mathbf{t}\|_\infty\to\infty$ within $H$, in order to be able to restrict attention to a compact subset over which we can find a global minimum. This allows us to be rather wasteful in the following argument.

Let $\mathcal B \subseteq \binom{[m]}{N}$ be the collection of sets $B \subseteq [m]$, of size $N$, such that $\{\mathbf{a_i^{\mathsf T}} \mid i \in B\}$ is a basis  of $\mathbb{R}^N$. For each such $B$ we let $A_B$ be the square matrix with rows given by $\{\mathbf{a_i^{\mathsf T}} \mid i \in B\}$.
Cauchy--Binet formula gives
\[
    \det M(\mathbf{t})
    =\sum_{B\in\mathcal B}
      (\det A_B)^2\exp\left(\sum_{i\in B}t_i\right),
\]
where we used that $M(\mathbf{t})=R(\mathbf{t})^{\mathsf T}R(\mathbf{t})$, for $R$ being an $m \times N$ matrix with $i$-the row given by $e^{t_i/2}\mathbf{a_i^{\mathsf T}}$. We also used, that for any choice of $N$ rows of $R$ which are not linearly independent the determinant vanishes, so the above sum may indeed only sum over the bases.

The determinants of $A_B$ are nonzero integers for $B\in\mathcal B$ implying $(\det A_B)^2\ge 1$. So, by being rather wasteful we get that for every $B\in\mathcal B$, 
\begin{equation}\label{eq:basisbound}
    F(\mathbf{t})=\log\det M(\mathbf{t})-\frac1q\sum_{i=1}^mt_i\ge\sum_{i\in B}t_i-\frac1q\sum_{i=1}^m t_i.
\end{equation}

Reorder the indices so that $t_1\ge\cdots\ge t_m$, and let  $r_k$ be the rank of $\{\mathbf{a_i} \mid i \in [k]\}$. Now process the vectors $\mathbf{a_1},\ldots, \mathbf{a_m}$ one at a time with greedily accepting a vector
precisely when it increases the span of the current set. The accepted vectors among
the first $k$ vectors hence span all of those first $k$ vectors, so the final
basis $B$ satisfies $|B\cap [k]|=r_k$. Summation by parts yields

\[
\begin{aligned}
    \sum_{i\in B}t_i=\sum_{i\in B} \left(t_m+\sum_{k=i}^{m-1} (t_k-t_{k+1})\right)
       &=Nt_m+\sum_{k=1}^{m-1}(t_k-t_{k+1})r_k,\\
    \frac1q\sum_{i=1}^m t_i=\frac1q\sum_{i=1}^m \left(t_m+\sum_{k=i}^{m-1} (t_k-t_{k+1})\right)
       &=Nt_m+\sum_{k=1}^{m-1}(t_k-t_{k+1})\frac{k}{q}.
\end{aligned}
\]
Since by the case assumption there are no tight proper subsets we know that $qr_k-k\ge 1$.
Consequently \eqref{eq:basisbound} gives
\begin{equation}\label{eq:coercive}
\begin{aligned}
    F(t)
    &\ge\sum_{k=1}^{m-1}(t_k-t_{k+1})
               \left(r_k-\frac{k}{q}\right)\ge\frac{t_1-t_m}{q}.
\end{aligned}
\end{equation}
On $H$, we have $t_m\le0\le t_1$, so
$t_1-t_m\ge\|\mathbf{t}\|_\infty$. Thus $F(\mathbf{t})\to\infty$ as
$\|\mathbf{t}\|_\infty\to\infty$ within $H$. It follows that $F$ attains a
minimum $\mathbf{t^*}\in H$. By \eqref{eq:invariance}, this is also a global minimum on $\R^m$.

We are now ready to define our weights. Put, for each $i \in [m]$
\[
    w_i:=e^{t_i^*},\qquad
    M:=M(\mathbf{t}^*)=\sum_i w_ia_ia_i^{\mathsf T},\qquad
    \lambda_i:=w_ia_i^{\mathsf T}M^{-1}a_i.
\]
The next ingredient is to show that the ``balancing factors'' $\lambda_i$ given above are all equal to $1/q$.

To see this, let $i \in [m]$, the function 
    $$f_i(u):=F(\mathbf{t^*}+u \mathbf e_i)-F(\mathbf{t^*})= \log \frac{\det M(\mathbf{t^*}+u \mathbf e_i)}{\det M} - \frac{u}{q}=\log(1+(e^u-1)\lambda_i)-   \frac{u}{q},$$
where to compute the ratio of determinants we used the matrix determinant lemma giving us 
\[
    \det\bigl(M+(e^u-1)w_i\mathbf{a_i}\mathbf{a_i^{\mathsf T}}\bigr)
    =\det(M)\bigl(1+(e^u-1)\lambda_i\bigr).
\]
Using that $\mathbf{t}^*$ is a global minimum of $F$ we know that $f_i$ has  a minimum at $u=0$ so 

\begin{equation}\label{eq:balanced}
0=\frac{d f_i}{d u}(0)
      =\lambda_i-\frac1q \implies \lambda_i=\frac1q.
\end{equation}

We are now ready for the final part of the proof. We return to our function
$$\Phi(\mathbf{x},\mathbf{k})=\sum_{i\in [m]}w_i
        \left(\mathbf{a_i^{\mathsf T}}\mathbf{x}-\beta_i-\frac12-k_i\right)^2,$$
which is now fully defined. As argued above there exists a global minimizing pair $(\mathbf{x},\mathbf{k})$. Let
\[
    r_i=\mathbf{a_i^{\mathsf T}}\mathbf{x}-\beta_i-\frac12-k_i.
\]
Each $k_i$ is a nearest integer to
$\mathbf{a_i^{\mathsf T}}\mathbf{x}-\beta_i-\frac12$, so $|r_i|\le \frac12$. Our target is to show $|r_i|\le \frac12-\frac{1}{2q}$, for all $i\in[m]$, which ensures the desired separation. 

Keeping $\mathbf{k}$ fixed and differentiating $\Phi$ with respect to $\mathbf{x}$ gives
\begin{equation}\label{eq:normal}
    \sum_{i=1}^m w_ir_ia_i=0.
\end{equation}
Indeed, fix $\mathbf{v} \in \R^N$ and note that $$f(s):=\Phi(\mathbf{x}+s\mathbf{v},\mathbf{k})=\sum_{i=1}^m w_i(r_i+s\mathbf{a_i^{\mathsf T}}\mathbf{v})^2.$$
Since $$0=\frac{df}{ds}(0)=2\mathbf{v^{\mathsf T}}\sum_{i=1}^m w_ir_ia_i,$$ and since $\mathbf{v}$ was arbitrary, the conclusion follows.

Fix $j\in [m]$, put $\mathbf{h}=w_jM^{-1}\mathbf{a_j}$, and let $\mathbf e_j$ be the
$j$-th coordinate vector of $\R^m$. For $\sigma\in\{\pm1\}$,
\eqref{eq:normal} gives
\[
\begin{aligned}
 &\Phi(\mathbf{x}+\sigma \mathbf{h},\mathbf{k}+\sigma\mathbf e_j)-\Phi(\mathbf{x},\mathbf{k})=-2\sigma w_jr_j+\mathbf{h^{\mathsf T}}M\mathbf{h}
                   -2w_j\mathbf{a_j^{\mathsf T}}\mathbf{h}+w_j=w_j(1-\lambda_j-2\sigma r_j).
\end{aligned}
\]
Minimality makes this nonnegative for either choice of the sign $\sigma$. Therefore,
\[
    |r_j|\le\frac{1-\lambda_j}{2}=\frac{q-1}{2q},
\]
as desired.
\end{proof}

\textbf{Remark.} Let us touch briefly upon the aforementioned question of how one could find the weights given the idea to use the weighted
least-squares argument above.  Given positive weights \(w_i\), put
\[
    M(\mathbf{w})=\sum_i w_i \mathbf{a_i}\mathbf{a_i^{\mathsf T}}
    \qquad\text{and}\qquad
    \lambda_i(\mathbf{w})
    =
    w_i\mathbf{a_i^{\mathsf T}}M(\mathbf{w})^{-1}\mathbf{a_i}.
\]
The preceding argument produces a point \(\mathbf{x}\) satisfying
\[
    \bigl\|\mathbf{a_i^{\mathsf T}}\mathbf{x}-\beta_i\bigr\|_{\mathbb R/\mathbb Z}
    \geq \frac{\lambda_i(\mathbf{w})}{2}.
\]
Thus, to obtain the best uniform estimate from this method, one is
naturally led to make the quantities \(\lambda_i(\mathbf{w})\) as equal as
possible.  Since
\[
    \sum_i\lambda_i(\mathbf{w})
    =
    \operatorname{tr}\!\left(
        M(\mathbf{w})^{-1}\sum_i w_i\mathbf{a_i}\mathbf{a_i^{\mathsf T}}
    \right)
    =n
\]
and there are \(dn\) rows, their average is \(1/d\).  The ideal choice
 of weights should therefore satisfy $\lambda_i(\mathbf{w})=\frac1d$ for every $i$. The log-determinant functional arises precisely from trying to solve these equations.  

\section{Lower bounding the chromatic number of certain norms}

In this section, we prove our lower bound (\Cref{thm:lower-bnd}), establishing that there is an open set of norms with chromatic number of their unit distance graph being at least $2d$.

The proof has three parts. The first part is concerned with defining a specific target graph $H_d$ with chromatic number $2d$ that we seek to find in the unit distance graphs. The second part is a stability lemma for prescribed differences showing that given a suitable norm in which we can find $H_d$, we can also find it in any slight perturbation of the norm. The third ingredient is concerned with constructing such a suitable norm.

\subsection{Generalised Moser spindle}\label{sec:spindle}
Let $\|.\|$  be a norm on $\R^d$ and suppose we were able to find the following configuration. Consisting of $2d$ points, including two distinguished points $a,b$, such that
\begin{equation}\label{eq:target-distances}
 \|\mathbf{p}-\mathbf{q}\|=1\quad
 \text{for every unordered pair }\{\mathbf{p},\mathbf{q}\}\ne\{\mathbf{a},\mathbf{b}\},
 \qquad \|\mathbf{a}-\mathbf{b}\|=\frac12.
\end{equation}

Given such a configuration $\Pcal$ we can take a centrally symmetric copy of it $2\mathbf{a} - \Pcal$ obtained by reflecting every point in $\Pcal$ across $\mathbf{a}$, giving us the second copy, with the same properties and with $\mathbf{b'}=2\mathbf{a}-\mathbf{b}$ being the image of $\mathbf{b}$, so that $\|\mathbf{b}-\mathbf{b'}\|=2\|\mathbf{b}-\mathbf{a}\|=1$. In particular, since in the unit distance graph both configurations correspond to $K_{2d}$ with an edge removed, if we were able to properly color the points of $\Pcal$ and $2\mathbf{a}-\Pcal$ using less than $2d$ colors this would force both $\mathbf{b}$ and $\mathbf{b'}$ to receive the same color as $\mathbf{a}$, giving us a monochromatic pair at distance one, which is a contradiction. 



Our goal now is to show we can find a configuration $\Pcal$ as in \eqref{eq:target-distances}.
\subsection{A stability lemma}


The following stability lemma says that if we find a $2d$ point configuration in $\R^d$ satisfying the property that the vectors defined by the differences between every pair of the points are distinct vertices of the convex hull polytope they define, then we can find a norm so that every norm in an open neighborhood close to it contains an equilateral point configuration, not far from our original point configuration. In reality, we will also want to allow scaling factors ($s_e$ below), which allow us to find point configurations with prescribed distances  (where, also for the convex hull assumption, we need to consider the pairs scaled by the same factor).

\begin{lem}\label{lem:stability}
Let $\mathbf{P_0},\ldots,\mathbf{P_{2d-1}}\in\R^d$, and assign a number $s_e>0$ to
each edge $e=ij$ of $K_{2d}$.  Orient the edges arbitrarily and put
\[
 \mathbf{u_e}=s_e(\mathbf{P_i}-\mathbf{P_j}).
\]
Suppose the vectors $\pm \mathbf{u_e}$ are pairwise distinct vertices of their
convex hull.  Then there is a norm $\|.\|_0$ with the following
property.  For every $\delta>0$, there is an open neighbourhood
$\Ucal$ of $\|.\|_0$ such that each $\|.\|\in\Ucal$ admits points
$\mathbf{P'_0},\ldots,\mathbf{P'_{2d-1}}$ satisfying
\[
 \mathbf{P'_0}=\mathbf{P_0},\qquad \max_i\norm{\mathbf{P'_i}-\mathbf{P_i}}<\delta,
 \qquad \bigl\| s_e(\mathbf{P'_i}-\mathbf{P'_j})\bigr\|=1
 \quad  \forall e=ij.
\]
\end{lem}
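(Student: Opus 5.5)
The plan is to set this up as a square system of equations and use a topological degree argument, so that it survives non-smooth perturbations of the norm. Fix $\mathbf{P'_0}=\mathbf{P_0}$ and treat $\mathbf{y}=(\mathbf{P'_1},\dots,\mathbf{P'_{2d-1}})\in\R^{d(2d-1)}$ as unknowns. For a norm $\|.\|$ consider
\[
 \Psi_{\|.\|}(\mathbf y)=\bigl(\|s_e(\mathbf{P'_i}-\mathbf{P'_j})\|\bigr)_{e=ij}\in\R^{E(K_{2d})}.
\]
The edge count is $|E(K_{2d})|=d(2d-1)$, which equals the number of unknowns. The goal is therefore to choose $\|.\|_0$ so that $\Psi_{\|.\|_0}$ is a local homeomorphism near $\mathbf y_0=(\mathbf{P_1},\dots,\mathbf{P_{2d-1}})$, with $\Psi_{\|.\|_0}(\mathbf y_0)=\mathbf 1$. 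Two facts then finish the proof. First, if $\|.\|$ is Hausdorff-close to $\|.\|_0$, then the two norm functions are uniformly close on compact sets. Second, this makes $\Psi_{\|.\|}$ uniformly close to $\Psi_{\|.\|_0}$ on a small closed ball around $\mathbf y_0$ of radius $<\delta$. Since $\Psi_{\|.\|_0}$ has local degree $\pm1$ at $\mathbf y_0$, Brouwer degree theory (homotopy invariance, or Rouché-type stability) gives a point $\mathbf y$ in that ball with $\Psi_{\|.\|}(\mathbf y)=\mathbf 1$. That is exactly the required configuration.

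\textbf{Constructing the norm.} Because the $\pm\mathbf{u_e}$ are distinct vertices of their convex hull, for each $e$ there is an open set of vectors $\mathbf{g_e}$ satisfying
\[
 \mathbf{g_e}\cdot\mathbf{u_e}=1>|\mathbf{g_e}\cdot\mathbf{u_f}|\quad\text{for all } f\ne e.
\]
This comes from strictly separating the vertex $\mathbf{u_e}$ from all the other points $\pm\mathbf{u_f}$ and $-\mathbf{u_e}$, then rescaling. Now define
\[
 \|\mathbf x\|_0=\max\Bigl(\max_f|\mathbf{g_f}\cdot\mathbf x|,\ c\norm{\mathbf x}\Bigr),
\]
with $c>0$ small enough that $c\norm{\mathbf{u_f}}<1$ for all $f$. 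This is a norm. In a neighbourhood of each $\mathbf{u_e}$ it coincides with the linear function $\mathbf x\mapsto\mathbf{g_e}\cdot\mathbf x$, since the maximum is attained strictly and only by that term. Hence $\Psi_{\|.\|_0}$ is affine-linear near $\mathbf y_0$ and satisfies $\Psi_{\|.\|_0}(\mathbf y_0)=\mathbf 1$. Its linear part is the matrix $J$ whose row $e=ij$ carries $s_e\mathbf{g_e}^{\mathsf T}$ in block $i$ and $-s_e\mathbf{g_e}^{\mathsf T}$ in block $j$, with block $0$ deleted. If $J$ is invertible, then $\Psi_{\|.\|_0}$ is a local affine homeomorphism and the degree argument above applies.

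\textbf{Invertibility of $J$ (the main obstacle).} The entries of $\det J$ are polynomial in the $\mathbf{g_e}$, and the admissible $\mathbf{g_e}$ range over a nonempty open set. So it suffices to show that $\det J$ is not the zero polynomial; then some admissible choice makes it nonzero. For this I would use the classical decomposition of $K_{2d}$ into $d$ edge-disjoint spanning trees $T_1,\dots,T_d$. Such a decomposition exists: there are $d(2d-1)$ edges and each spanning tree has $2d-1$ edges, and explicit zigzag constructions (or Nash-Williams' theorem) provide it. Set $\mathbf{g_e}=\mathbf{e_k}$ for $e\in T_k$. Then, after permuting rows and columns, $J$ becomes block diagonal over the coordinates $k=1,\dots,d$. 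Each block is the scaled signed incidence matrix of the spanning tree $T_k$ with the row of vertex $0$ removed, which is nonsingular. Hence $\det J\not\equiv0$, which completes the plan. The remaining details are routine: the uniform-closeness estimate on compact sets and the choice of the ball radius below $\delta$.
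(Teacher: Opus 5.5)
Your proposal is correct and is essentially the paper's proof. Both use the same exposing functionals and the same max-of-linear-functionals norm, which is affine near each $\mathbf{u_e}$, and both make the linearization invertible by taking $\mathbf{g_e}=\mathbf{e_k}$ on a decomposition of $K_{2d}$ into $d$ spanning trees (the paper gets the trees from Walecki). The remaining differences are cosmetic: the paper applies Brouwer's fixed-point theorem to $h\mapsto h-R_B^{-1}F_{\|.\|}(h)$ where you use degree theory, and your extra term $c\norm{\mathbf x}$ replaces the paper's observation that invertibility of $R_B$ forces the functionals to span $\R^d$.

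One small repair is needed. The normalized set $\{\mathbf{g_e}\cdot\mathbf{u_e}=1\}$ is not open in $(\R^d)^E$, and the test point $\mathbf{e_k}$ is not normalized, so "nonzero polynomial, hence nonzero somewhere on the admissible set" does not apply as written. Instead, first choose the $\mathbf{g_e}$ in the open cone of unnormalized exposing functionals, where $\det J\ne0$ at some point. Only then rescale to $\mathbf{g_e}\cdot\mathbf{u_e}=1$: this just rescales individual rows of $J$, so $\det J\ne 0$ is preserved, exactly as in the paper.
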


The proof of the lemma exploits the fact that the vertices of the
convex-hull polytope are exposed.  For each vertex, we choose a
supporting hyperplane passing through it and containing the polytope
entirely on one side.  After suitable normalization, the corresponding
slabs define a centrally symmetric polytope, which we take as the unit
ball of our target norm $\|.\|_0$.  We choose the supporting hyperplanes
carefully so that this construction gives a genuine norm, so that each
of the original vertices lies in the relative interior of a unique
facet of the unit ball, and so that the linearisation of the resulting
distance equations is invertible.

Given a norm close to our target norm, we then consider small
perturbations of the original points and define an error map measuring
the failure of the perturbed configuration to satisfy the prescribed
distance equations.  For the target norm, this error map is locally an
invertible linear map, while for a nearby norm, it is a uniformly small
perturbation of that map.  Once these facts have been established, a
standard application of Brouwer's fixed-point theorem provides a small
perturbation for which the error vanishes. This proof strategy is
heavily inspired by the arguments developed by Brass~\cite{Br} and
Dekster~\cite{De} in their work on finding large equilateral sets, as well as by recent work of Greilhuber, Schildkraut, and Tidor \cite{greilhuber2024more} establishing tight lower bounds for the unit distance problem in the typical norm.
We therefore postpone the full proof to Appendix~\ref{sec:appendix}.


\subsection{Finding the initial exposed configuration}
The final missing ingredient is to find $2d$ points in $\R^d$ so that their pairwise difference vectors (suitably scaled) all give exposed vertices in the convex hull polytope they define. 

Fix $d\ge3$ and put $r=d-1$.  Write
$\R^d=\R^{d-1}\times\R$, with coordinates $(\mathbf{x},z)$, with $\mathbf{x} \in \R^{d-1}$ and $z \in \R$. We think of $z$ as the ``height'' of the point and will exploit it to break certain symmetries. 
We let $\mathbf{e_1},\ldots,\mathbf{e_{d-1}}$ be the standard basis of $\R^{d-1}$.  

Let us introduce the following auxilliary parameters
\begin{equation}\label{eq:parameters}
 \varepsilon=\frac1{100},\qquad
 h_1=0,\qquad
 h_i=1+\frac{i-1}{10(d-1)}\quad(2\le i\le r),\qquad
 \mathbf{v}=\frac12\mathbf{e_1}+\varepsilon\sum_{i=2}^r \mathbf{e_i}.
\end{equation}
In particular, note that $1<h_2<\cdots<h_r<\frac{11}{10}.$
Define $2d$ points by
\begin{equation}\label{eq:points}
 \mathbf{p_i^\sigma}=(\sigma \mathbf{e_i},h_i)
 \quad(1\le i\le r,\ \sigma\in\{\pm 1\}),
 \qquad
 \mathbf{q^\tau}=(\tau \mathbf{v},3)
 \quad(\tau\in\{\pm 1\}),
\end{equation}
and let $\Pcal$ be their set. So the $2d-2$ points $p_i^\sigma$ consist of complementary pairs which match the standard basis vectors in the first $d-1$ coordinates, with the first pair being at height $0$ and the remaining ones at height close to $1$ but slightly different ones between the pairs. The final, special pair is very close to being the complementary pair $\frac12 \mathbf{e_1}$, except it got slightly shifted in every other coordinate and put at a much higher height of $3$.

Let $\mathcal U$ be the set of all pair differences of points in $\Pcal$ except for
$\mathbf{q^+}-\mathbf{q^-}$, which we replace by $\mathbf{w}=2(\mathbf{q^+}-\mathbf{q^-})=(4\mathbf{v},\mathbf{0})$.
We note that we include both signs of every vector in $\mathcal U$. Let $K$ be the convex hull of the vectors in $\mathcal U$. The following proposition is the final ingredient needed to invoke \Cref{lem:stability}.

\begin{prop}\label{prop:exposed}
All vectors in $\mathcal U$ are pairwise distinct vertices of the polytope $K$.
\end{prop}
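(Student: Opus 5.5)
The plan is to exhibit, for every $\mathbf u\in\mathcal U$, a linear functional $f_{\mathbf u}$ on $\R^d=\R^{d-1}\times\R$ that is uniquely maximised on $\mathcal U$ at $\mathbf u$. Such a functional shows that $\mathbf u$ is an exposed vertex of $K$. It also gives pairwise distinctness for free, since distinct maximisers of different functionals cannot coincide as vectors (a quick direct check that the listed differences are distinct would also do). By the symmetry $\mathcal U=-\mathcal U$, it suffices to treat one vector from each antipodal pair, and within each type the symmetries $\mathbf e_i\mapsto-\mathbf e_i$ and $\tau\mapsto-\tau$ reduce the number of genuinely different cases. I would take functionals of the form $f(\mathbf x,z)=\lambda z+g(\mathbf x)$, where $\lambda$ exploits the height coordinate and $g$ is built from coordinate selectors.

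First I would sort $\mathcal U$ into types by $z$-coordinate:
\begin{itemize}
\item[(A)] $\mathbf q^\tau-\mathbf p_i^\sigma=(\tau\mathbf v-\sigma\mathbf e_i,\,3-h_i)$, with height $3$ for $i=1$ and heights in $(1.9,2)$, all distinct, for $i\ge2$;
\item[(B)] $\mathbf p_i^\sigma-\mathbf p_j^{\tau}$ with $i\ne j$, with heights $\pm h_i$ when one index is $1$ and heights $|h_i-h_j|<\tfrac1{10}$ otherwise;
\item[(C)] $(\pm2\mathbf e_i,0)$;
\item[(D)] $\pm\mathbf w=(\pm4\mathbf v,0)$.
\end{itemize}

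For type (A) I would take $\lambda=1$, so that higher levels dominate. For $i=1$ the four vectors at height $3$ have $\mathbf x$-parts $\pm(\tfrac12\mathbf e_1\mp\mathbf e_1)+\dots$, and these are in convex position. A functional $Cz+g(\mathbf x)$ with $C$ large exposes each of them, since all other heights are at most $2$. For $i\ge2$, take $g=-\sigma M x_i+\tau x_1$ with $M$ large. Vectors at greater heights (other (A)'s with $h_j<h_i$) have $e_i$-coordinate only $\pm\varepsilon$, so the $M$-term beats a height gap of at most $1.1$. Vectors with $e_i$-coordinate $-\sigma$ of size $1$ at lower or equal height either sit strictly lower, where the $\lambda z$ term separates them because the $h_j$ are distinct, or are separated by the $\tau x_1$ term.

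Type (B) is handled in the same spirit. I would use $g=M(\sigma x_i-\tau x_j)$, which selects the $\mathbf x$-part $\sigma\mathbf e_i-\tau\mathbf e_j$, together with $\lambda=\pm1$ to break the tie with $\mathbf p_j^{-\tau}-\mathbf p_i^{-\sigma}$-type competitors. The only competitors achieving the same $g$-value among (A) vectors have $g$-value at most $M(1+\varepsilon)$, versus $2M$. The heights $h_i$ are distinct and $\varepsilon$ is small, so no two (B) vectors have both the same selector value and the same height.

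Height-zero vectors are where ties are most delicate, and I expect them to be the main obstacle. Here $2\mathbf e_1$ and $\mathbf w=(2\mathbf e_1+4\varepsilon\sum_{i\ge2}\mathbf e_i,0)$ agree in $x_1$, and the (A) vectors have $\mathbf x$-parts of comparable size. For $(2\mathbf e_1,0)$ I would use $f=x_1-\delta\sum_{i\ge2}x_i$ with small $\delta>0$. This gives value $2$ against $2-4\varepsilon\delta(r-1)$ for $\mathbf w$, at most $\tfrac32+O(\delta)$ for (A), and at most $1+\delta$ for (B). For $\mathbf w$ I would use $f=x_1+\sum_{i\ge2}x_i$, giving value $2+4\varepsilon(r-1)$. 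The competitors are: (A) at most $\tfrac32+\varepsilon(r-1)+1$, which needs checking against the $2+4\varepsilon(r-1)$ gain (true since the gain term grows faster); (B) at most $2$; (C) exactly $2$. For $(2\mathbf e_i,0)$ with $i\ge2$ I would use $f=x_i$: the value is $2$, (A) gives at most $1+\varepsilon$, $\mathbf w$ gives $4\varepsilon$, and (B) gives at most $1$.

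The care is in verifying that these margins hold uniformly in $d$. The quantity $\varepsilon(r-1)$ is unbounded, so I would normalise selectors coordinatewise rather than by sums wherever that threatens a comparison. If the summed functional for $\mathbf w$ fails for large $d$, I would fall back to $f=x_1+\eta\sum_{i\ge2}x_i$ with $\eta$ tuned to $1/(r-1)$.
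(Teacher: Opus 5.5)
\textbf{There is a genuine gap, in case (A) with $i\ge2$ and in case (B) when one index is $1$.} Your overall strategy, an exposing functional for each $\mathbf u\in\mathcal U$, is right and close to the paper's. What fails is the claim that ``$M$ large'' works in these two cases, because there you never compare against the height-zero vectors.

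In (A), take $\mathbf u=\mathbf{q^\tau}-\mathbf{p_i^\sigma}$ and your $f=z-\sigma Mx_i+\tau x_1$. Then $f(\mathbf u)=\tfrac72-h_i+M(1-\sigma\tau\varepsilon)$. The type-(C) vector $\mathbf{p_i^{-\sigma}}-\mathbf{p_i^{\sigma}}=(-2\sigma\mathbf{e_i},0)$ has twice the $x_i$-component of $\mathbf u$, so its $f$-value is $2M$. It therefore beats $\mathbf u$ as soon as $M(1+\sigma\tau\varepsilon)>\tfrac72-h_i$, for instance for every $M\ge3$. Separately, for $\sigma\tau=1$ the same-height vector $\mathbf{q^{-\tau}}-\mathbf{p_i^\sigma}$ exceeds $f(\mathbf u)$ by $2M\varepsilon-1$. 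So your $\tau x_1$ tie-break only works when $M\varepsilon<\tfrac12$.

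On the other side, beating $\mathbf{q^\tau}-\mathbf{p_1^{-\tau}}$, whose value is $\tfrac92-\sigma\tau M\varepsilon$, forces $M>1+h_i$. With $\lambda=1$, the functional works only in the window $1+h_i<M<(\tfrac72-h_i)/(1+\sigma\tau\varepsilon)$, roughly $(2.1,2.37)$. So ``$M$ large'' is exactly the wrong regime. This balancing of the $x_i$-weight against the height weight is the delicate point of the proof, not the height-zero case. The paper's $\ell=\tau x_1-4\sigma x_i+2z$ is such a balanced choice.

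The same issue arises in (B) when one index is $1$. Take $\mathbf u=\mathbf{p_1^\sigma}-\mathbf{p_j^\tau}$ with $\lambda=-1$, so $f(\mathbf u)=2M+h_j$. Then $f(\sigma\mathbf w)=M(2-4\sigma\tau\varepsilon)$, which for $\sigma\tau=-1$ beats $f(\mathbf u)$ once $M>25h_j$. Meanwhile $\mathbf{p_1^\sigma}-\mathbf{q^{-\sigma}}$ has value $3+M(\tfrac32-\sigma\tau\varepsilon)$, which forces $M$ above roughly $4.1$. This also shows your bound $M(1+\varepsilon)$ for the (A)-type competitors is too small here. Again a bounded window is needed; the paper uses $5$.

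The paper avoids this competitor-by-competitor bookkeeping. It certifies $P-Q$ by a functional with a unique maximum at $P$ and unique minimum at $Q$ over the $2d$ points of $\mathcal P$, together with $\ell(P-Q)>|\ell(\mathbf w)|$. This controls all pair differences at once. The upper bound on $M$ then appears simply as the requirement that $\mathbf{q^\tau}$ beat $\mathbf{p_i^{-\sigma}}$.

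Two smaller points:
\begin{itemize}
\item The map $\mathbf e_i\mapsto-\mathbf e_i$ is not a symmetry of $\mathcal P$, since all coordinates of $\mathbf v$ are positive; only $\mathbf x\mapsto-\mathbf x$ is. Both failures above do depend on the sign of $\sigma\tau$, so you cannot reduce cases this way.
\item For $\mathbf w$, the (A)-type competitors are bounded by $\tfrac32+\varepsilon(r-1)$, not $\tfrac52+\varepsilon(r-1)$. Your summed functional therefore works for every $d$ and needs no fallback. The paper simply uses $x_1+\tfrac12x_2$.
\end{itemize}
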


\begin{proof}
We use the following elementary test. Given two points $\mathbf{P},\mathbf{Q}$ in $\mathcal{P}$ if we can find a linear function $\ell:\R^d \to \R$
which has a unique maximum $\ell(P)$ and unique minimum $\ell(Q)$ on
$\Pcal$, then this certifies that the vector $\mathbf{P}-\mathbf{Q}$ is a vertex of $K$ provided also $\ell(P-Q)>|\ell(w)|$. The negative vector is then also exposed by $-\ell$. 

So it suffices to exhibit such a function for each of our vectors in $\mathcal{U}$.

\smallskip
\underline{Opposite pairs.}
For $\mathbf{p_1^+}-\mathbf{p_1^-}$, take $\ell(\mathbf{x},z)=x_1-\frac12x_2.$
Its unique extrema on $\Pcal$ are $1$ and $-1$, at $\mathbf{p_1^+}$ and
$\mathbf{p_1^-}$, while $\ell(\mathbf{w})=2-2\varepsilon<2$.
For $\mathbf{p_i^+}-\mathbf{p_i^-}$ with $i\ge2$, take $\ell(\mathbf{x},z)=x_i$;
its extrema are again uniquely $1,-1$, and $|\ell(w)|=4\varepsilon<2$.
For $\mathbf{w}$ itself, use $\ell(\mathbf{x},z)=x_1+\tfrac12x_2$.  All points of
$\Pcal$ have $\ell$-value in $[-1,1]$, so every pair difference has
absolute value at most $2$, whereas $\ell(\mathbf{w})=2+2\varepsilon>2.$
Thus $w$ and $-w$ are exposed as well.

\smallskip
\underline{Pairs $\mathbf{p_1^\sigma},\mathbf{p_i^\tau}
$ with $i\ge2$.}
Take $\ell(\mathbf{x},z)=-5\sigma x_1+5\tau x_i+z.$ the unique minimum is $-5$ at $\mathbf{p_1^\sigma}$, and the unique maximum
is $5+h_i>6$ at $\mathbf{p_i^\tau}$.  Indeed, the opposite points have
values $5$ and $-5+h_i$, all remaining lower points have value
$h_j\in(1,1.1)$, and $\ell(\mathbf{q^\pm})=3\pm\left(-5\sigma/2+5\varepsilon\tau\right) \in[0.45,5.55].$
Furthermore,
 $|\ell(\mathbf{w})|\le10+20\varepsilon=10.2
 <10+h_i=\ell(\mathbf{p_i^\tau}-\mathbf{p_1^\sigma}).$

\smallskip
\underline{Pairs $\mathbf{p_i^\sigma},\mathbf{p_j^\tau}$ with $i,j\ge2$ and $i\ne j$.}
Let $t=\sgn(h_i-h_j)/100$, and $\ell(\mathbf{x},z)=\sigma x_i-\tau x_j+tz.$
Then, $t(h_i-h_j)>0$.  The unique maximum is
$1+th_i$ at $\mathbf{p_i^\sigma}$, and the unique minimum is
$-1+th_j$ at $\mathbf{p_j^\tau}$.  Among these two pairs of points, the
only other candidates have values $1+th_j$ and $-1+th_i$, so the
strict inequalities follow from the choice of $t$.  Every remaining
point has absolute $\ell$-value at most
$2\varepsilon+3|t|=0.05$.
Finally, $\ell(\mathbf{p_i^\sigma}-\mathbf{p_j^\tau})=2+t(h_i-h_j)>2$, and 
$|\ell(\mathbf{w})|=4\varepsilon|\sigma-\tau|\le8\varepsilon<2.$

\smallskip
\underline{Pairs $\mathbf{q^\tau},\mathbf{p_i^\sigma}$ with $i\ge2$.}
Use $\ell(\mathbf{x},z)=\tau x_1-4\sigma x_i+2z.$
The desired values are $\ell(\mathbf{q^\tau})=\frac{13}{2}-4\sigma\tau\varepsilon\ge6.46,$ and $\ell(\mathbf{p_i^\sigma})=-4+2h_i<-1.8.$
These are the unique maximum and minimum.  The opposite lower
point has value $4+2h_i<6.2$, the other upper point has value
$11/2+4\sigma\tau\varepsilon\le5.54$, the two points $\mathbf{p_1^\pm}$
have values $\pm1$, and all remaining points have values in $(2,2.2)$.  Also,
$\ell(\mathbf{q^\tau}-\mathbf{p_i^\sigma})=\frac{21}{2}-2h_i-4\sigma\tau\varepsilon>8,$ and $|\ell(\mathbf{w})|=|2\tau-16\sigma\varepsilon|<3$.

\smallskip
\underline{Pairs $\mathbf{q^\tau},\mathbf{p_1^\sigma}$.}
The largest last coordinate of a prescribed signed vector is $3$.
Exactly four vectors attain it: $\mathbf{q^\tau}-\mathbf{p_1^\sigma}=(\tau \mathbf{v}-\sigma \mathbf{e_1},3),$ for $\sigma,\tau\in\{-1,1\}.$ Since $\mathbf{v}$ and $\mathbf{e_1}$ are linearly independent, these are the four
vertices of a nondegenerate parallelogram.  Their convex hull is the
face of $K$ supported by $z=3$.  They are therefore vertices of $K$.
Their negatives are the vertices of the opposite face.
\end{proof}

\subsection{Completing the proof of the lower bound}

\begin{proof}[Proof of Theorem~\ref{thm:main}]
Use the configuration \eqref{eq:points}.  Assign multiplier $2$ to
the pair $q^+,q^-$ and multiplier $1$ to every other pair.
Proposition~\ref{prop:exposed} verifies the hypothesis of
Lemma~\ref{lem:stability}.  Thus, for every norm in a sufficiently
small nonempty open family, the points can be perturbed so that all
pairwise distances are $1$, except that the distance between the two
distinguished points is $1/2$. The proof is now complete by the argument given in Section~\ref{sec:spindle}.
\end{proof}



\section{View obstruction and lattice zonotopes}\label{sec:view-obstruction-zonotopes}

In this section, we prove the two applications stated in the introduction.  The argument is a rank-sensitive version of the balancing-and-exchange proof of \Cref{lem:matrix}, without the integrality assumption or the divisibility assumption on the number of vectors. We note, though, that \Cref{lem:matrix} does not require the assumption that \emph{every} $k$ rows are linearly independent, only that the vectors partition into $k$ linearly independent sets.

\subsection{Equal-leverage scaling}
We begin with the normalization underlying the argument. It is essentially a repeat of the argument we used in the proof of \Cref{lem:matrix}, and can be viewed as a special case of the result in \cite{VO-Barthe}.

\begin{lem}\label{lem:equal-leverage}
Let $k < n$ be integers. Let $\mathbf{a_1},\ldots,\mathbf{a_n}\in \R^k$ be such that any $k$ are linearly independent. Then, there are positive
weights $w_1,\ldots,w_n$ such that, for $M=\sum_{i=1}^n w_i\mathbf{a_i}\mathbf{a_i}^{\mathsf T},$
one has for all $i \in [n]$
\begin{equation}\label{eq:equal-leverage}
    w_i\mathbf{a_i}^{\mathsf T}M^{-1}\mathbf{a_i}=\frac{k}{n}.
\end{equation}
\end{lem}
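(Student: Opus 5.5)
We will reuse the log-determinant variational argument from the proof of \Cref{lem:matrix}, now with the normalization $k/n$ in place of $1/q$. For $\mathbf t\in\R^n$ we set
\[
 M(\mathbf t)=\sum_{i=1}^n e^{t_i}\mathbf{a_i}\mathbf{a_i}^{\mathsf T},\qquad F(\mathbf t)=\log\det M(\mathbf t)-\frac kn\sum_{i=1}^n t_i .
\]
Since any $k$ of the $\mathbf{a_i}$ form a basis, $M(\mathbf t)$ is positive definite and $F$ is smooth. Because $\det M(\mathbf t+c\mathbf1)=e^{kc}\det M(\mathbf t)$, we get $F(\mathbf t+c\mathbf1)=F(\mathbf t)$. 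It therefore suffices to minimize $F$ over the hyperplane $H=\{\sum_i t_i=0\}$.

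Suppose a minimizer $\mathbf t^*$ exists. Set $w_i=e^{t_i^*}$ and $M=M(\mathbf t^*)$. By the matrix determinant lemma,
\[
 F(\mathbf t^*+u\mathbf e_i)-F(\mathbf t^*)=\log\bigl(1+(e^u-1)\lambda_i\bigr)-\frac{ku}{n},\qquad \lambda_i=w_i\mathbf{a_i}^{\mathsf T}M^{-1}\mathbf{a_i}.
\]
Setting the derivative at $u=0$ to zero gives $\lambda_i=k/n$ for every $i$, which is exactly \eqref{eq:equal-leverage}. As a consistency check, $\sum_i\lambda_i=\operatorname{tr}(I_k)=k$.

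The main step is coercivity: $F(\mathbf t)\to\infty$ as $\|\mathbf t\|_\infty\to\infty$ within $H$. By Cauchy--Binet,
\[
 \det M(\mathbf t)=\sum_{B}(\det A_B)^2 e^{\sum_{i\in B}t_i},
\]
where the sum runs over all $k$-subsets $B$, and every $k$-subset is a basis. Unlike the integral case we lack the bound $(\det A_B)^2\ge1$. Instead we use $c_0=\min_B(\det A_B)^2>0$, which is positive because there are finitely many $B$ and each determinant is nonzero. This gives $F(\mathbf t)\ge \log c_0+\sum_{i\in B}t_i-\frac kn\sum_i t_i$ for every $B$.

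Now order the coordinates so that $t_1\ge\dots\ge t_n$ and take $B=[k]$, which is allowed since every $k$-set is a basis. The greedy basis has $|B\cap[j]|=r_j=\min(j,k)$. Summation by parts gives
\[
 F(\mathbf t)\ge \log c_0+\sum_{j=1}^{n-1}(t_j-t_{j+1})\Bigl(\min(j,k)-\frac{kj}{n}\Bigr).
\]
For $1\le j\le n-1$ we have $\min(j,k)-kj/n\ge \min\bigl(j(1-k/n),\,k(1-j/n)\bigr)\ge \frac{k}{n}\cdot\frac{n-k}{n}\cdot\frac{1}{\max(k,n-k)}$, and in any case this is a fixed positive constant $c>0$; the hypothesis $k<n$ is used here. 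Hence $F(\mathbf t)\ge \log c_0+c(t_1-t_n)$.

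On $H$ we have $t_1-t_n\ge\|\mathbf t\|_\infty$, so $F$ is coercive on $H$. A continuous coercive function on the closed set $H$ attains its minimum, which completes the argument. I expect this positivity of the summation-by-parts coefficients, which replaces the ``no tight subset'' case analysis of \Cref{lem:matrix} by the full-spark hypothesis, to be the only point requiring care.
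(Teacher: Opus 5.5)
Your proof is correct and follows the paper's argument essentially step for step. It uses the same functional $\log\det M(\mathbf t)-\frac{k}{n}\sum_i t_i$, Cauchy--Binet with $\min_I(\det A_I)^2>0$ replacing the integrality bound, summation by parts with $B=[k]$ to get coercivity on $H$, and the first-order condition at the minimizer. Your lower bound $\min\{k,n-k\}/n^2$ on the summation-by-parts coefficients is weaker than the paper's $\min\{k,n-k\}/n$, but any positive constant suffices.
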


\begin{proof}
For $\mathbf t=(t_1,\ldots,t_n)\in\R^n$, set
\[
    M(\mathbf t)=\sum_{i=1}^n e^{t_i}\mathbf{a_i}\mathbf{a_i}^{\mathsf T},
    \qquad
    F(\mathbf t)=\log\det M(\mathbf t)-\frac{k}{n}\sum_{i=1}^n t_i.
\]
The matrix $M(\mathbf t)$ is positive definite.  Since its size is $k\times k$,
\begin{equation}\label{eq:full-spark-invariance}
    F(\mathbf t+c\one)=F(\mathbf t)
    \qquad(c\in\R).
\end{equation}
We therefore minimize $F$ on the hyperplane
\[
    H=\left\{\mathbf t\in\R^n:\sum_{i=1}^n t_i=0\right\}.
\]

For $I\in\binom{[n]}{k}$, let $A_I$ be the $k\times k$ matrix with row set
$I$.  Cauchy--Binet gives
\begin{equation}\label{eq:full-spark-cauchy-binet}
    \det M(\mathbf t)
    =\sum_{I\in\binom{[n]}{k}}(\det A_I)^2
      \exp\left(\sum_{i\in I}t_i\right).
\end{equation}
Every determinant in this sum is nonzero.  Put
\[
    D=\min_{I\in\binom{[n]}{k}}(\det A_I)^2>0.
\]
After relabelling, suppose that $t_1\ge\cdots\ge t_n$.  Keeping only the term
$I=[k]$ in \eqref{eq:full-spark-cauchy-binet}, we obtain
\[
    F(\mathbf t)\ge \log D+
    \sum_{i=1}^k t_i-\frac{k}{n}\sum_{i=1}^n t_i.
\]
Summation by parts rewrites the last two terms as
\begin{equation}\label{eq:full-spark-coercive}
    \sum_{r=1}^{n-1}(t_r-t_{r+1})
    \left(\min\{r,k\}-\frac{kr}{n}\right).
\end{equation}
Every coefficient in \eqref{eq:full-spark-coercive} is positive; in fact, it is
at least $\min\{k,n-k\}/n$.  Hence
\[
    F(\mathbf t)\ge \log D+
    \frac{\min\{k,n-k\}}{n}(t_1-t_n).
\]
On $H$, the quantity $t_1-t_n$ tends to infinity with
$\|\mathbf t\|_\infty$.  Thus $F$  attains a minimum
$\mathbf t^*$ on $H$.  By \eqref{eq:full-spark-invariance}, this is also a global minimum on $\R^n$.

Set $w_i=e^{t_i^*}$ and $M=M(\mathbf t^*)$.  Differentiating the log
determinant at the minimum gives
\[
    0=\frac{\partial F}{\partial t_i}(\mathbf t^*)
    =w_i\mathbf{a_i}^{\mathsf T}M^{-1}\mathbf{a_i}-\frac{k}{n},
\]
which is \eqref{eq:equal-leverage}.
\end{proof}

\subsection{Simultaneous avoidance of the integers}

\begin{proof}[Proof of \Cref{thm:affine-avoidance}]
Let $A$ be the matrix whose rows are the vectors $\mathbf{a_1}^{\mathsf T},\ldots, \mathbf{a_n}^{\mathsf T}$. Choose the weights from \Cref{lem:equal-leverage}.  For
$\mathbf x\in\R^k$ and $\mathbf z\in\Z^n$, consider the weighted squared
distance to the half-integer lattice
\begin{equation}\label{eq:full-spark-energy}
    \Phi(\mathbf x,\mathbf z)
    =\sum_{i=1}^n w_i
      \left(\mathbf{a_i}^{\mathsf T}\mathbf x-b_i-\frac12-z_i\right)^2.
\end{equation}
Let
\[
    \mu=\inf_{\mathbf x\in\R^k,\,\mathbf z\in\Z^n}
    \Phi(\mathbf x,\mathbf z).
\]
For irrational $A$ this infimum need not be attained, so fix $\eta>0$ and
choose $(\mathbf x_0,\mathbf z)$ with
$\Phi(\mathbf x_0,\mathbf z)\le\mu+\eta$.  Keeping this $\mathbf z$ fixed,
minimize $\Phi(\mathbf{x},\mathbf{z})$ over $\mathbf x$. For this fixed $\mathbf z$, the function
$\Phi(\,\cdot\,,\mathbf z)$ is a strictly convex quadratic with
positive-definite Hessian (equal to $2M$).  It therefore has a unique minimizer
$\mathbf x$, and $\Phi(\mathbf x,\mathbf z)
    \le \Phi(\mathbf x_0,\mathbf z)
    \le \mu+\eta.$ 
    
Write
\[
    r_i=\mathbf{a_i}^{\mathsf T}\mathbf x-b_i-\frac12-z_i.
\]
The normal equations are
\begin{equation}\label{eq:full-spark-normal}
    \sum_{i=1}^n w_ir_i\mathbf{a_i}=0.
\end{equation}

Fix $j\in[n]$ and put
\[
    \mathbf h_j=w_jM^{-1}\mathbf{a_j}.
\]
For $\sigma\in\{-1,1\}$, compare $(\mathbf x,\mathbf z)$ with
$(\mathbf x+\sigma\mathbf h_j,\mathbf z+\sigma\mathbf e_j)$.  Expanding
\eqref{eq:full-spark-energy} and using \eqref{eq:full-spark-normal}, we get
\begin{align*}
 \Phi(\mathbf x+\sigma\mathbf h_j,
       \mathbf z+\sigma\mathbf e_j)-\Phi(\mathbf x,\mathbf z)=-2\sigma w_jr_j
   +\mathbf h_j^{\mathsf T}M\mathbf h_j
   -2w_j\mathbf{a_j}^{\mathsf T}\mathbf h_j+w_j.
\end{align*}
By \eqref{eq:equal-leverage},
\[
    \mathbf h_j^{\mathsf T}M\mathbf h_j
    =w_j\frac{k}{n}
    \qquad\text{and}\qquad
    w_j\mathbf{a_j}^{\mathsf T}\mathbf h_j
    =w_j\frac{k}{n}.
\]
Consequently
\begin{equation}\label{eq:full-spark-exchange}
    \Phi(\mathbf x+\sigma\mathbf h_j,
       \mathbf z+\sigma\mathbf e_j)-\Phi(\mathbf x,\mathbf z)
    =w_j\left(1-\frac{k}{n}-2\sigma r_j\right).
\end{equation}
At every competing pair $\Phi$ takes value at least $\mu$, whereas $\Phi(\mathbf x, \mathbf z) \le \mu+\eta$. Thus the left-hand side of
\eqref{eq:full-spark-exchange} is at least $-\eta$ for both signs $\sigma$, and hence
\begin{equation}\label{eq:full-spark-residual}
    |r_j|\le \frac{n-k}{2n}+\frac{\eta}{2w_j}.
\end{equation}
Let $w_{\min}=\min_i w_i$.  Taking $\eta$ sufficiently small makes the
right-hand side of \eqref{eq:full-spark-residual} less than $1/2$.  We then
have, simultaneously for all $j$,
\[
    \normZ{\mathbf{a_j}^{\mathsf T}\mathbf x-b_j}
    =\normZ{\frac12+r_j}
    =\frac12-|r_j|
    \ge \frac{k}{2n}-\frac{\eta}{2w_{\min}}.
\]
Letting $\eta\to0$ proves \eqref{eq:affine-avoidance-intro}.

If $A$ is rational, choose a positive integer $D$ such
that $DA$ is integral. The function
\[
f(\mathbf x)
=
\min_{i\in[n]}
\normZ{\mathbf a_i^{\mathsf T}\mathbf x-b_i}
\]
is continuous and invariant under translations by $D\mathbb Z^k$.
It therefore attains its maximum on the compact domain
$[0,D]^k$, proving that the supremum is attained. Moreover, the function
\[
\mathbf x\longmapsto
\min_{\mathbf z\in\mathbb Z^n}\Phi(\mathbf x,\mathbf z)
\]
is also continuous and $D\mathbb Z^k$-periodic. Hence its infimum is
attained, so one may take $\eta=0$ in the preceding argument.
\end{proof}

\subsection{Schoenberg's view-obstruction conjecture}
For completeness, we now spell out the deduction in Schoenberg's language. Recall that
\[
    M_i(\delta)=
    \{\mathbf x\in\R^k:\normZ{\mathbf{a_i}^{\mathsf T}\mathbf x-b_i}\le\delta/2\}.
\]
An \emph{$n$-chromo} is a family $(M_i(\delta))_{i\in[n]}$ whose union is
$\R^k$, and it is \emph{admissible} if every $k$ of the normals
$\mathbf a_i$ are linearly independent.  Let $\delta_{k,n}$ be the infimum of the common densities of admissible $n$-chromos in $\R^k$.

\begin{cor}\label{cor:schoenberg}
For every $1\le k<n$,
\[
    \delta_{k,n}\ge \frac{k}{n}.
\]
\end{cor}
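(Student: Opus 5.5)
The corollary follows directly from \Cref{thm:affine-avoidance}. The plan is to show that no admissible $n$-chromo of common density $\delta<k/n$ exists; then every admissible $n$-chromo has density at least $k/n$, and taking the infimum gives $\delta_{k,n}\ge k/n$.

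Fix an admissible family with normals $\mathbf{a_1},\ldots,\mathbf{a_n}\in\R^k$, shifts $b_1,\ldots,b_n$, and common density $\delta<k/n$. Admissibility says that every $k$ of the $\mathbf{a_i}$ are linearly independent, which is exactly the hypothesis of \Cref{thm:affine-avoidance} with $\mathbf b=(b_1,\ldots,b_n)$ (and $1\le k<n$ as required). That theorem gives
\[
\sup_{\mathbf x\in\R^k}\min_{i\in[n]}\normZ{\mathbf{a_i}^{\mathsf T}\mathbf x-b_i}\ge \frac{k}{2n}>\frac{\delta}{2}.
\]
Because the supremum strictly exceeds $\delta/2$, we need not worry about whether it is attained. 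By definition of the supremum there is some $\mathbf x$ with $\min_i\normZ{\mathbf{a_i}^{\mathsf T}\mathbf x-b_i}>\delta/2$. This $\mathbf x$ lies in no $M_i(\delta)$, so the family does not cover $\R^k$ and is not an $n$-chromo.

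Hence every admissible $n$-chromo has common density $\delta\ge k/n$. Every element of the set whose infimum defines $\delta_{k,n}$ is at least $k/n$, so $\delta_{k,n}\ge k/n$.

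The only real care needed is the strict inequality $\delta<k/n$, which sidesteps attainment of the supremum for irrational normals. The substantive work is already contained in \Cref{thm:affine-avoidance}.
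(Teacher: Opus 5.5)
Your proposal is correct and follows the paper's proof: both apply \Cref{thm:affine-avoidance} to an admissible family of density $\delta<k/n$ and produce a point outside every monochrome. The only cosmetic difference is that the paper introduces an explicit $\varepsilon$ with $\frac{k}{2n}-\varepsilon>\frac{\delta}{2}$, whereas you use directly that the supremum strictly exceeds $\delta/2$.
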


\begin{proof}
Suppose that an admissible $n$-chromo has density $\delta<k/n$.  Choose
$\varepsilon>0$ so that $\frac{k}{2n}-\varepsilon>\frac{\delta}{2}.$
By \Cref{thm:affine-avoidance}, there is $\mathbf x\in\R^k$ for which for all $i \in [n]$
\[
    \normZ{\mathbf{a_i}^{\mathsf T}\mathbf x-b_i}
    \ge \frac{k}{2n}-\varepsilon
    >\frac{\delta}{2}.
\]
This point $\mathbf{x}$ belongs to none of the monochromes, contradicting that they cover
$\R^k$.
\end{proof}


\subsection{The covering radius of lattice zonotopes}

\begin{proof}[Proof of \Cref{thm:zonotope-covering}]
Let $B\in\Z^{d\times m}$ be the matrix whose columns are the generators
$\mathbf z_1,\ldots,\mathbf z_m$, so that
\[
    Z=B\left[-\frac12,\frac12\right]^m.
\]

If $m=d$, then $B$ is invertible so $Z+B\mathbb Z^d
    =
    B\bigl([-1/2,1/2]^d+\mathbb Z^d\bigr)
    =
    \mathbb R^d.$
Since $B\mathbb Z^d\subseteq\mathbb Z^d$, it follows that
$Z+\mathbb Z^d=\mathbb R^d$, so the covering radius is at most $1$ as desired.
Hence, assume $m>d$ and put $k=m-d$.

Choose a rational matrix $A\in\Q^{m\times k}$ whose columns form a basis of
$\ker B$.  The matrix $A$ has any $k$ rows linearly independent.  Indeed, suppose the rows indexed by $I\subseteq[m]$, with $|I|=k$, were dependent. This means there exists $\mathbf x \in \R^k \setminus \{\mathbf{0}\}$ such that the submatrix $A_I$ of $A$ obtained by only keeping the rows indexed by $I$ satisfies $A_I\mathbf{x}=\mathbf{0}$. Then, taking $\mathbf{v}=A \mathbf{x}$, we have that $\mathbf v$ is nonzero (since $A$ has full column rank) and $\mathbf v\in\im A=\ker B$ so $B\mathbf v=\mathbf 0$. Furthermore, $A_I\mathbf{x}=\mathbf{0}$ translates to the coordinates of $\mathbf v$ indexed by $I$ being $0$. It is therefore supported on the complementary set $J=[m]\setminus I$, which has size $d$. But then $B_J\mathbf v_J=\mathbf 0,$ contradicting the assumption that every $d$ columns of $B$ are linearly independent. 

Fix $\mathbf y\in\R^d$ and choose $\mathbf c\in\R^m$ with
$B\mathbf c=\mathbf y$.  Apply the rational, attained form of
\Cref{thm:affine-avoidance} to $A$ with the shift $\mathbf b=\frac12\one-\mathbf c$.
There is $\mathbf x\in\R^k$ such that, for every $i\in[m]$,
\[
    \normZ{c_i+(A\mathbf x)_i-\frac12}
    \ge \frac{k}{2m}
\implies 
    \normZ{c_i+(A\mathbf x)_i}
    \le \frac12-\frac{k}{2m}
    =\frac{d}{2m}.
\]
Choose $\mathbf z\in\Z^m$ coordinatewise so that
$\|\mathbf c+A\mathbf x-\mathbf z\|_\infty\le\frac{d}{2m}.$

Using $BA=0$, we obtain
\[
    \mathbf y-B\mathbf z
    =B(\mathbf c+A\mathbf x-\mathbf z)
    \in \frac{d}{m}B\left[-\frac12,\frac12\right]^m
    =\frac{d}{m}Z.
\]
As $\mathbf y$ was arbitrary,
    $\frac{d}{m}Z+B\Z^m=\R^d.$
    
Finally, $B\Z^m\subseteq\Z^d$, so $$\frac{d}{m}Z+\Z^d=\R^d.$$
This is precisely the desired inequality $\mu(Z,\Z^d)\le d/m$.
\end{proof}


\section{Concluding remarks}

Theorem~\ref{thm:lower-bnd} gives one nonempty open family, not a dense family. It therefore does not prove $\chi(G_{\|\cdot\|})=2d$ for a comeagre subset of the entire norm space. In fact, one can give an explicit neighborhood of the Euclidean norm in which the particular spindle used in our proof cannot occur. Let $B_2^d$ denote the Euclidean unit ball, and set
$$
 r_d=
 \begin{cases}
  \displaystyle\sqrt{1+\frac{2}{d}}, & d \text{ even},\\[6pt]
  \displaystyle\sqrt{1+\frac{2}{d-\frac{1}{d+2}}}, & d \text{ odd},
 \end{cases}
 \qquad
 \varepsilon_d=\frac{r_d-1}{r_d+1}.
$$
A theorem of Sch\"utte \cite{Schuette1963} states that every set $S$ of at
least $d+2$ points in $\R^d$ satisfies
\[
 \frac{\max_{\mathbf{x},\mathbf{y}\in S}\|\mathbf{x}-\mathbf{y}\|_2}
 {\min_{\substack{\mathbf{x},\mathbf{y}\in S\\ \mathbf{x}\ne\mathbf{y}}}
  \|\mathbf{x}-\mathbf{y}\|_2}
 \ge r_d,
\]
and that this bound is sharp; see also \cite{Barany1994} for a short proof.
Now let $B$ be the unit ball of a norm $\|\cdot\|$ satisfying which has Hausdorff distance to $B_2^d$ less than $\varepsilon_d$. Let $\varepsilon$ be such that $d_{\mathrm H}(B,B_2^d)<\varepsilon<\varepsilon_d.$ So, $(1-\varepsilon)B_2^d\subseteq B\subseteq(1+\varepsilon)B_2^d.$ Consequently, every pair at unit distance in $\|\cdot\|$ has Euclidean distance in $[1-\varepsilon,1+\varepsilon]$. A unit-distance clique for $\|\cdot\|$ therefore has Euclidean distance ratio at most $\frac{1+\varepsilon}{1-\varepsilon}<r_d,$ and so has at most $d+1$ vertices. Since $2d-1\ge d+2$ for $d\ge3$, every norm in this explicit neighborhood admits no unit-distance $K_{2d-1}$ and hence cannot contain the spindle used in the proof of \Cref{thm:lower-bnd}.

For the Euclidean norm, the known exponential lower bounds imply that the chromatic number is larger than $2d$ in all sufficiently large dimensions \cite{FW}. It is natural to expect strict inequality in every nontrivial dimension. The planar bound $\chi(G_{\|\cdot\|_2})\ge5>4$ \cite{Exoo-Ismailescu,de-Grey} establishes this in two dimensions, explicit constructions also establish it for $d=4,7,8$ (see, \cite{ExooIsmailescuLim2014}, \cite{ExooIsmailescuLim2014}, and \cite{KahleTaha2015}, respectively) and an explicit lower bound due to Cherkashin, Kulikov and
Raigorodskii \cite{CherkashinKulikovRaigorodskii2018} shows that it is in fact true for any $d \ge 9$. This leaves open only the cases $d=3,5,6$. Given this, it could be interesting to provide a simple general argument to show that, in fact, for every integer $d>1$, $\chi(G_{\|\cdot\|_2})>2d.$

We note that it is entirely possible (although in our opinion unlikely) that \Cref{thm:lower-bnd} applies for \emph{all} norms. A conjecture of Petty raised in 1971 (\cite{Pe}, 
see also \cite{BMP-survey}) asserts that the clique number of the unit distance graph\footnote{Usually referred to as the equilateral number.} is at least $d+1$ for every norm $\|.\|$ on $\R^d$. This would imply at least a linear lower bound on the chromatic number for all norms. This is known in dimension $d \le 3$ (\cite{Pe}), and is also known for norms that are sufficiently close to the $d$-dimensional Euclidean norm (\cite{Br}, \cite{De}), or more generally to the $d$-dimensional $\ell_p$-norm $\ell_p^d$ for any $1 <p \leq \infty$ (\cite{SV}). Combining this with a result of \cite{AM} asserting that every $d$-dimensional normed space contains a subspace of dimension $r=e^{\Omega(\sqrt {\log d})}$ which is either close to $\ell_2^r$ or to $\ell_{\infty}^r$, it was proved by Swanepoel and Villa \cite{SV} that the equilateral number of any norm $\|.\|$ on $\R^d$ is at least $e^{b \sqrt {\log d}}$  for some absolute constant $b>0$. This, to the best of our knowledge, provides the best known lower bound which holds for all norms, for the chromatic number problem as well. See \cite{AP}, \cite{Sw0}, \cite{Sw} for more information on the rich history of the study of equilateral numbers. 

We note that as part of the proof of our lower bound \Cref{thm:lower-bnd} we also answer in the negative another question of Alon, Buci\'c, and Sauermann \cite{ABS}. Namely, we show that there is no $0-1$ law for subgraph containment in the unit distance graph of a typical norm. Namely, our argument shows that $K_{2d-1}$ is a subgraph of the unit distance graph of a non-empty open set of norms, but it is also not a subgraph of the unit distance graph of an open set of norms (for any norm close to Euclidean by the argument described above).

We note that a finite field version of the problem tackled by \Cref{lem:matrix} was considered by Nagy, Pach, and Tomon \cite{NPT}. In fact, one can use their main result to prove a weaker separation of $1/d^{O(d)}$ in \Cref{lem:matrix}. Their Conjecture 1.4 would imply a linear separation $1/O(d)$. Given this, it would be very interesting to find a way of using the ideas behind our $1/(2d)$ separation result over $\mathbb{Z}$ to resolve the finite field variant as well.

\textbf{Acknowledgments.} 
This work was initiated at the 2023 Structural Graph Theory workshop held at the IMPAN B\k{e}dlewo Conference Center (Poland) from September 27- October 2, 2023. 
The authors would like to thank Lisa Sauermann, Istvan Tomon, Leo Versteegen, Vanshika Jain, Dima Zakharov, Imre Leader, Luka Mili\'cevi\'c, Filip Ku\v{c}erak, Fankang He, Noah Kravitz, Peter Pal Pach for useful conversations surrounding this problem over the years. ChatGPT 6 Pro has provided us with the proof of the final ingredient we needed in the proof of \Cref{thm:main}, namely that of Lemma \ref{lem:matrix}, following a prolonged discussion, which included sharing various of our observations about the problem. These included the induction idea used in the ``tight'' case and the general approach of trying to minimize a certain distance function to suitably chosen half-integer vectors. The lower-bound argument was also found with the assistance of ChatGPT 6 Pro.

\providecommand{\MR}[1]{}
\providecommand{\MRhref}[2]{%
  \href{http://www.ams.org/mathscinet-getitem?mr=#1}{#2}
}

 \bibliographystyle{amsplain_initials_nobysame}
 \bibliography{ref}

\appendix

\section{Proof of the stability lemma}\label{sec:appendix}
In this section we include a proof of Lemma~\ref{lem:stability}.

\begin{proof}[Proof of Lemma~\ref{lem:stability}]
Write $E=E(K_{2d})$.  For every $e \in E$, the vertex $\mathbf{u_e}$ of the convex hull polytope $K$ of the vectors $\{\pm \mathbf{u_e}\}_{e \in E}$ is exposed, meaning there exists a linear function $\mathbf{x} \to \mathbf{a_e} \cdot \mathbf{x}$ strictly maximized over $K$ at $\mathbf{u_e}$. So, for
each $e\in E$ the set $A_e$ of $\mathbf{a_e} \in \R^d$ such that
\begin{equation}\label{eq:exposing-cones}
 \mathbf{a_e} \cdot \mathbf{u_e}>0,\qquad
 \mathbf{a_e} \cdot \mathbf{u_e}>|\mathbf{a_e} \cdot \mathbf{u_f}|\quad \forall f\ne e
\end{equation}
is nonempty and open.

Given $A=(\mathbf{a_e})_{e \in E} \in (\R^d)^E$ we define a linear map $R_{A}:(\R^d)^{2d-1}\longrightarrow\R^E$, by for each $e=ij$ setting
\begin{equation}\label{eq:Rg} 
(R_{A}(h))_e=s_e \: \mathbf{a_e} \cdot (\mathbf{h_i}-\mathbf{h_j}),
\end{equation}
where we view $h$ as consisting of $2d-1$ vectors $\mathbf{h_1},\ldots,\mathbf{h_{2d-1}}\in \R^d$, and set $\mathbf{h_0}=\mathbf{0}$. These $\mathbf{h}_i$'s will play the role of slight perturbations of our points $\mathbf{P_i}$'s, giving us the desired points for any given norm. In light of this, $\mathbf{h_0}=\mathbf{0}$ captures the fact that our point configurations are translation invariant and simply fixes the point $\mathbf{P_0}$ in place.

We note that these maps are between two spaces of the same dimension, since $d(2d-1)=\binom{2d}{2}=|E|,$ meaning their determinant is defined. The determinant of $R_{A}$ is a polynomial in the coordinates of $\mathbf{a_e}$'s, and we first show that this polynomial is not identically zero.

To see this, we first partition $E=E(K_{2d})$ into $d$ spanning trees
 $T_1,\ldots,T_d$. Such a partition (in fact into Hamilton paths) follows from the classical Walecki's theorem, see  \cite{lucas1892}, which gives an edge decomposition of a clique of odd order into Hamilton cycles and deleting one vertex. 
Since our goal is to show that the determinant polynomial is not identically zero, it is sufficient to find a specific choice $A_0$ for $A$ in such a way that it does not vanish. We chose $A_0$ so that $\mathbf{a_e}=\mathbf{e_k}$ if $e\in T_k$ in our decomposition. To show its determinant does not vanish we will show its kernel is empty. Towards this, assume we have an $h$ as above so that $R_{A_0}(h)=0$. Then, since $s_e>0$ we have $\mathbf{a_e} \cdot (\mathbf{h_i}-\mathbf{h_j})=0$ for all $e\in E$. Focusing on $e \in T_k$, our choice of $A_0$ ensures that the $k$-th coordinates of all the $h_i$ agree along $T_k$.
Since $T_k$ is a spanning tree and $\mathbf{h_0}=\mathbf{0}$, all these coordinates vanish. This holds for every $k$, showing $R_{A_0}$ has an empty kernel and hence a non-zero determinant.

A nonzero real polynomial cannot vanish on a nonempty open set $\prod_{e \in E} A_e$. So, we may therefore choose $A=(\mathbf{a_e})_{e \in E}$ with each $\mathbf{a_e}$ satisfying the corresponding inequalities in \eqref{eq:exposing-cones}, while also making $R_A$ invertible.

Let us normalize by
\[
 \mathbf{b_e}=\frac{\mathbf{a_e}}{\mathbf{a_e} \cdot \mathbf{u_e}}.
\]
Then \eqref{eq:exposing-cones} becomes
\begin{equation}\label{eq:normalized-support}
 \mathbf{b_e} \cdot \mathbf{u_e}=1,\qquad |\mathbf{b_e} \cdot \mathbf{u_f}|<1\quad \forall f\ne e,
\end{equation}
and the map $R_B$ with $B=(\mathbf{b_e})_{e \in E}$ 
\eqref{eq:Rg} remains invertible (each individual scaling corresponds to scaling only one output coordinate of $R$ by \eqref{eq:Rg}, so it does not change its rank).

We are now ready to define our target norm $\|.\|_0$, we simply set
\begin{equation}\label{eq:norm}
 \|\mathbf{z}\|_0=\max_{e\in E}|\mathbf{b_e} \cdot \mathbf{z}|.
\end{equation}
The non-negativity and scaling are immediate, while the subadditivity follows from linearity and triangle inequality, so the only missing ingredient for showing this is indeed a norm is to show that it does not vanish except at $\mathbf{0}$. 
This follows since if $\mathbf{z} \in \R^d$ is a non-zero vector in the orthogonal complement of all the $\mathbf{b_e}$, we could set $\mathbf{h_2}=\mathbf{z}$, and all other $\mathbf{h_i}=\mathbf{0}$ to obtain a nonzero vector in the kernel of
$R_B$.  Thus, \eqref{eq:norm} is a norm.

At $\mathbf{z}=\mathbf{u_e}$, the positive expression $\mathbf{b_e} \cdot \mathbf{z}$ is the unique maximizer in \eqref{eq:norm}, by \eqref{eq:normalized-support}.  Hence
\begin{equation}\label{eq:local-linearity}
 \|\mathbf{z}\|_0=\mathbf{b_e} \cdot \mathbf{z}\qquad\text{for any }\mathbf{z}\text{ in a small neighbourhood of }\mathbf{u_e}.
\end{equation}

For any norm $\|.\|$, define the ``error'' map $F:(\R^d)^{2d-1}\longrightarrow\R^E$, by for each $e=ij$ setting
\[
 F_{\|.\|}(h)_e
 =\bigl\|s_e(\mathbf{P_i}-\mathbf{P_j}+\mathbf{h_i}-\mathbf{h_j})\bigr\|-1,
 \qquad \mathbf{h_0}=\mathbf{0}.
\]
This is chosen so that if we can find a small $h \in (\R^d)^{2d-1}$ for which $F_{\|.\|}(h)=\mathbf{0}$, then the points $\mathbf{P'_i}=\mathbf{P_i}+\mathbf{h_i}$ precisely satisfy the desired conditions of the lemma.

By \eqref{eq:local-linearity}, on a sufficiently small closed
Euclidean ball $B_\rho\subseteq(\R^d)^{2d-1}$ we have
\begin{equation}\label{eq:linear-error}
 F_{\|.\|_0}(h)=R_B(h),
\end{equation}
since $\bigl\|s_e(\mathbf{P_i}-\mathbf{P_j}+\mathbf{h_i}-\mathbf{h_j})\bigr\|_0=\bigl\|\mathbf{u_e}+s_e(\mathbf{h_i}-\mathbf{h_j})\bigr\|_0=\mathbf{u_e} \cdot \mathbf{b_e}+ s_e(\mathbf{h_i}-\mathbf{h_j}) \cdot \mathbf{b_e}=1+s_e(\mathbf{h_i}-\mathbf{h_j}) \cdot \mathbf{b_e}.$ 
The edge vectors in the definition of $F_{\|.\|}$ range over a bounded set as $h$ varies in $B_\rho$.  By homogeneity
of norms, $F_{\|.\|}\to F_{\|.\|_0}$ uniformly on $B_\rho$ as
we take $\|.\|$ arbitrarily close to $\|.\|_0$ in the Hausdorff distance (of their unit balls).  Thus, for all norms $\|.\|$ sufficiently close to $\|.\|_0$,
\[
 \sup_{h\in B_\rho}
 \norm{R_B^{-1}\bigl(F_{\|.\|}(h)-R_B(h)\bigr)}<\rho.
\]
The continuous map
\[
 h\longmapsto h-R_B^{-1}F_{\|.\|}(h)
\]
therefore maps $B_\rho$ into itself.  Brouwer's fixed-point theorem
gives a fixed point $h$, and at that point $F_{\|.\|}(h)=\mathbf{0}$.
Taking $\mathbf{P'_i}=\mathbf{P_i}+\mathbf{h_i}$ proves the lemma.
\end{proof}

\end{document}